\title{Coordinate Descent with Arbitrary Sampling I: \\Algorithms and Complexity\thanks{The  authors  acknowledge support from the EPSRC  Grant EP/K02325X/1,
{\em Accelerated Coordinate Descent Methods for Big Data Optimization}. Most of the material of this paper was obtained by the authors in Spring 2014, and was presented by PR  in June 2014 at the  ``Khronos Days Summer School'' focused on
``High-Dimensional Learning and Optimization'' in Grenoble, France \cite{Grenoble_theory}; \url{http://www.maths.ed.ac.uk/\%7Eprichtar/docs/cdm-talk.pdf}.}}
\author{Zheng Qu \footnote{School of Mathematics, The University of Edinburgh, United Kingdom (e-mail: zheng.qu@ed.ac.uk)} \qquad \qquad 
 Peter Richt\'{a}rik \footnote{School of Mathematics, The University of Edinburgh, United Kingdom (e-mail: peter.richtarik@ed.ac.uk) }}
\begin{document}
\maketitle

\begin{abstract} We study the problem of minimizing the sum of a smooth convex function and a convex block-separable regularizer and
propose a new randomized coordinate descent method, which we call ALPHA. Our method at every iteration  updates a {\em random subset}  of coordinates, following an {\em arbitrary distribution}. No coordinate descent methods capable to handle an arbitrary sampling have been studied in the literature before for  this problem. ALPHA is a remarkably flexible algorithm: in special cases, it reduces to  deterministic and randomized methods such as gradient descent, coordinate descent, parallel coordinate descent and distributed coordinate descent -- both in nonaccelerated and accelerated variants. The variants with arbitrary (or importance) sampling are new.  We provide a complexity analysis of ALPHA,  from which we deduce as a direct corollary complexity bounds for its many variants, all matching or improving  best known bounds.
\end{abstract}

\section{Introduction}

With the dawn of the big data age, there has been a growing interest in solving optimization problems of unprecedented sizes. It was soon realized that traditional approaches, which work extremely well for problems of moderate sizes and when solutions of high accuracy are required, are not efficient for modern problems of large enough  size and for applications where only rough or moderate accuracy solutions are sufficient.  The focus of the optimization, numerical analysis and machine learning  communities, and of practitioners in the sciences and industry,  shifted to first-order (gradient) algorithms \cite{NesterovIntro}. 

However, once the size of problems becomes truly big, it is necessary to turn to methods which are able to output a reasonably good solutions after an amount of work roughly equivalent to reading the data describing the problem a few times. For this to be possible, methods need to be able to progress while reading only a small part of the data describing the problem, which often means that a single iteration needs to be based on less information than that contained in the gradient of the objective (loss) function. The most popular methods of this type are stochastic gradient methods \cite{tongSGD, nemirovski2009robust, Pegasos-MAPR, Pegasos2, IProx-SDCA}, randomized coordinate descent methods \cite{PCD2007, SDCA-2008, UCDC, PCDM, Pegasos2, ICD, DQA, SPCDM, Hydra, Hydra2, IProx-SDCA, MinibatchASDCA, APROXSDCA, APCG, QUARTZ, S2CD} and semi-stochastic gradient descent methods \cite{SAG, zhanglijun, SVRG,  S2GD, Mixed2013,  MISO, SAGA, proxSVRG, mS2GD, S2CD}.

\subsection{Randomized coordinate descent}

In this paper we focus on randomized coordinate descent methods. After the seminal work of Nesterov~\cite{Nesterov:2010RCDM}, which provided an early  theoretical justification of these methods for unconstrained convex minimization, the study has been successively extended to $L1$-regularized \cite{Shalev-Shwartz2011,TTD}, proximal~\cite{UCDC,RBCDMLuXiao}, parallel~\cite{PCDM, APPROX}, distributed~\cite{Hydra, Hydra2, QUARTZ} and primal-dual \cite{Pegasos2, QUARTZ} variants of  coordinate descent. Accelerated coordinate decent---characterized by its  $O(1/k^2)$ complexity for non-strongly convex problems---was studied in~\cite{Nesterov:2010RCDM,RBCDMLuXiao}. However, these methods are of theoretical nature only due to the fact that they rely on the need to perform full-dimensional vector operation at every iteration, which destroys the main advantage of coordinate descent -- its ability to reduce the problem into subproblems of smaller sizes. A theoretically and practically efficient accelerated coordinate descent methods were proposed recently by Lee and Sidford~\cite{lee2013efficient} and Fercoq and Richt\'{a}rik~\cite{APPROX}, the latter work (APPROX algorithm) combining acceleration with parallelism and proximal setup. An accelerated distributed coordinate descent algorithm~\cite{Hydra2} is obtained by specializing APPROX to a distributed sampling. All above mentioned papers only consider unconstrained or separably constrained problems. Some progress on linearly-coupled constraints has been made by Necoara et al in~\cite{Necoara:Parallel,Necoara:rcdm-coupled}. Asynchronous variants of parallel coordinate descent methods were developed by Liu, Wright et al~\cite{WrightAsynchrous13,WrightAsynchrous14}.

Virtually all existing work in stochastic optimization deals with a {\em uniform sampling}. In the context of coordinate descent, this means that the random subset (sampling) of coordinates chosen and updated at every iteration has the property that each coordinate is chosen {\em equally likely}.  The possibility to assign different selection probabilities to different coordinates---also known as {\em importance sampling}---was considered in~\cite{Nesterov:2010RCDM, UCDC} and recently in \cite{IProx-SDCA, SPDC}. However, these works consider the serial case only:  a single coordinate is updated in each iteration.  Randomized coordinate descent methods updating a {\em subset of coordinates following an arbitrary distribution} (i.e., using an arbitrary sampling) were first investigated by Richt\'{a}rik and Tak\'{a}\v{c}~\cite{NSync} (NSync method)  for strongly convex and smooth objective functions, and subsequently by Qu, Richt\'{a}rik and Zhang~\cite{QUARTZ} (QUARTZ method),  for strongly convex and possibly nonsmooth functions, and in a primal-dual framework. 

In this paper we give the {\em first fully unified analysis} of gradient type algorithms which contain randomized coordinate descent on one end of the spectrum and gradient (or accelerated gradient) descent on the other hand. All our complexity results match or improve on the state of the art in all cases where specialized algorithms for specific samplings already exist. Moreover, we managed to substantially simplify the analysis for the sake of making the material accessible to a wide community.

\subsection{Problem Formulation}

In this paper we consider the composite  optimization problem 
\begin{equation}\label{eq-probse}
\begin{array}{ll}
 \mathrm{minimize} & F(x)\eqdef f(x)+\psi(x)\\
\mathrm{subject~to~} & x=(x^1,\dots,x^n)\in \R^{N_1}\times\dots\times \R^{N_n}=\R^N,
\end{array}
\end{equation}
where $f:\R^N \rightarrow \R$ is convex and differentiable, $\psi:\R^N\rightarrow \R\cup\{+\infty\}$ is   block-separable:
$$
\psi(x)=\sum_{i=1}^n\psi^i(x^i),
$$
and each $\psi^i:\R^{N_i}\rightarrow \R\cup\{+\infty\}$ is convex and closed. 

\subsection{Contributions}

We now summarize the main contributions of this work.



\paragraph{New algorithm.} We propose ALPHA (Algorithm~\ref{alg:general})  -- a randomized gradient-type method  for  solving  the convex composite  optimization problem \eqref{eq-probse}. In each iteration, ALPHA picks and updates a random subset of the blocks $\{1,2,\dots,n\}$, using an  {\em arbitrary sampling.} That is, we allow for the distribution of the random set-valued mapping to be arbitrary (and as explained further below, analyze the iteration complexity of the method).

To the best of our knowledge, there are only two  methods in the literature with the ``arbitrary sampling'' property, the NSync method of Richt\'{a}rik and Tak\'{a}\v{c} \cite{NSync} (focusing on the simple problem of minimizing a smooth strongly convex function) and the QUARTZ method of Qu, Richt\'{a}rik and Zhang \cite{QUARTZ} (a primal-dual method; considering strongly convex but possibly nonsmooth functions appearing in machine learning). Hence, our work is complementary to this development.

\paragraph{Complexity analysis.} We study the iteration complexity of ALPHA. That is, for an arbitrary (but ``proper'') sampling, we provide bounds on the number of iterations needed to approximately solve the problem, in expectation. Our general bounds  are formulated in Section~\ref{sec-proximalmin}: Theorem~\ref{th-proximal_normal} covers the non-accelerated variant with $O(1/k)$ rate and Theorem~\ref{th-proximal_fast} covers the accelerated variant with $O(1/k^2)$ rate, where $k$ is the iteration counter. To the best of our knowledge, these are the first complexity results for a randomized coordinate descent methods utilizing an {\em arbitrary sampling} for problem  \eqref{eq-probse}.  

\paragraph{Expected separable overapproximation.} Besides the dependence of the complexity bound on the iteration counter $k$, it is important to study its dependence on the sampling $\hat{S}$ and the objective function. Our results make this dependence explicit: they hold under the assumption that $f$ admits an expected separable overapproximation (ESO) with respect to the sampling $\hat{S}$ (Assumption~\ref{ass-eso}).  This is an inequality involving $f$ and $\hat{S}$ which determines certain important parameters $v_1,\dots,v_n$ which are needed to run the method (they determine the stepsizes) and which also appear in the complexity bounds.  In some cases it is possible to {\em design} a sampling which optimizes the complexity bound.

In the case of a {\em serial sampling}, which is by far the most common type of sampling studied in conjunction with randomized coordinate descent methods ($\hat{S}$ is serial if $|\hat{S}|=1$ with probability 1), the parameter $v_i$ can simply be set to the Lipschitz constant of the block-derivative of $f$ corresponding to block $i$. In particular, if $n=1$, then $v_1$ is the Lipschitz constant of the gradient of $f$~\cite{Nesterov:2010RCDM, UCDC}. 
The situation is more complicated in the case of a {\em parallel sampling} ($\hat{S}$ is parallel if it is not serial; that is, if we allow for multiple blocks to be updated at every iteration) -- and this why there is a need for the ESO inequality. Intuitively speaking, the parameters $v_1,\dots,v_n$ capture certain smoothness properties of the gradient of $f$ in a random subspace spanned by the blocks selected by the sampling $\hat{S}$.

The ESO concept is of key importance in the design and analysis of  randomized coordinate descent methods \cite{PCDM, SPCDM, Hydra, NSync,APPROX, Hydra2, QUARTZ, paper2}. We  provide a {\em systematic study  of ESO inequalities} in a companion paper~\cite{paper2}.

\paragraph{Simple complexity analysis in the smooth case.} In order to make the exposition more accessible, we first focus on ALPHA applied to problem \eqref{eq-probse} with $\psi\equiv 0$ (we call this the ``smooth case''). In this simpler setting, it is possible to provide a {\em simplified complexity analysis} -- we do this in Section~\ref{sec-smooth}; see Theorem~\ref{th-smooth_normal} (non-accelerated variant with $O(1/k)$ rate) and Theorem~\ref{th-smooth_fast} (accelerated variant with $O(1/k^2)$ rate). For convenience, ALPHA specialized to the smooth case is formulated as Algorithm~\ref{alg:smooth}.  Our analysis in this case is different from the one we give in Section~\ref{sec-proximalmin}, where we analyze the method in the general proximal setup.  

\paragraph{Flexibility.}  ALPHA is a remarkably flexible\footnote{We have named the method ALPHA because of this flexibility: ``ALPHA'' as a single source from which one obtains diversity.} algorithm, encoding a number of classical, recent and new algorithms in special cases, depending on the choice of the parameters of the method: sampling $\hat S$ and  ``stepsize sequence'' $\{\theta_k\}$. We devote Section~\ref{sec-specialcasesm} to highlighting several of the many algorithms ALPHA reduces to in special cases, focusing on the smooth case for simplicity (special cases in the general proximal setting are discussed in the appendix). In particular, if $\hat{S}=\{1,2,\dots,n\}$ with probability 1, ALPHA reduces to a deterministic method:  {\em gradient descent} (GD; Algorithm~\ref{alg:gd}) or {\em accelerated  gradient descent} (AGD; Algorithm~\ref{alg:accgd}), depending on the choice of the sequence $\{\theta_k\}$.  For a non-deterministic  sampling, we obtain {\em parallel coordinate descent} (PCD; Algorithm~\ref{alg:rcdm}) and {\em  accelerated parallel coordinate descent} (APCD; Algorithm~\ref{alg:accsmooth}) with {\em arbitrary sampling} -- which is new. If a uniform sampling is used, PCD reduces to the  PCDM algorithm \cite{PCDM}. If a distributed
\footnote{Distributed sampling is a structured uniform sampling first introduced in \cite{Hydra} and further studied in \cite{Hydra2,QUARTZ} and in the companion paper \cite{paper2}. In a distributed sampling, the blocks $\{1,2,\dots,n\}$ are first partitioned into $c$ sets of equal cardinality (it is useful to think of $c$ to be equal to the number of compute nodes in a distributed computing environment). The sampling is constructed by letting each node choose a subset of a fixed size (say $\tau$) of the blocks it owns, uniformly at random and independently from others, and then taking the union of these random sets. This union is a random subset of the set of blocks; and is called the $(c,\tau)$-distributed sampling.} 
sampling is used instead, PCD reduces to Hydra \cite{Hydra}. Similarly, if a uniform sampling is used, APCD reduces to APPROX \cite{APPROX} (in fact, our version of APPROX is a bit more flexible with respect to choice of $\theta_0$, which leads to a better complexity result). APCD specialized to a distributed sampling reduces to Hydra$^2$ \cite{Hydra2}.

\paragraph{Robustness.} Since we establish a complexity result for an arbitrary sampling, one of the key contributions of this work is to show  that coordinate descent methods are robust to the choice of the sampling $\hat{S}$. In many applications one is {\em forced} to sample the coordinates/blocks in a  non-traditional way and up to this point the issue of whether the resulting algorithm would converge (let alone the issue of estimating its complexity) was open. For instance, in many metric learning / matrix problems one wishes to find a positive semidefinite matrix satisfying certain properties. It is often efficient to work with an algorithm which would in each iteration update all the elements in a certain row and the corresponding column of the matrix. If we think of the elements of this matrix as coordinates, then any sampling induced in this way puts more probability on the diagonal elements of than on the off-diagonal elements. An algorithm of this type was not analyzed before. The complexity of such a method would follow as a special of our general results specialized to the corresponding sampling.

\paragraph{Improved complexity results.} In all cases where ALPHA reduces to an existing method, our complexity bound either matches the best known bound for that method or improves upon the best known bound. For instance, while the complexity of PCDM \cite{PCDM} (which coincides with PCD specialized to a uniform sampling; Algorithm~\ref{alg:rcdm}) depends on  the size of a certain level-set of $f$ (and in particular, requires the level set to be bounded), our bound  does not involve this quantity (see Section~\ref{subsec-PCDM}). Another example is  APPROX \cite{APPROX} (which is closely related to  APCD specialized to a uniform sampling): we obtain a more compact and improved result.

\paragraph{Two in one.} We provide a {\em unified } complexity analysis covering the nonaccelerated and accelerated variants of ALPHA. This is achieved by establishing a certain key technical recursion (Lemma~\ref{lem:recursion2}) for an {\em arbitrary } choice of the parameters $\{\theta_k\}$. Since the two variants of ALPHA differ in the choice of this sequence only, the analysis of both is identical up to this point. The recursion is then analyzed in two different ways, depending on the sequence $\{\theta_k\}$, which leads to the final complexity result.

\paragraph{Efficient implementation.}  As formulated in Algorithm~\ref{alg:general}, ALPHA  seems to require that two vectors in $\R^N$ be added at each iteration (unless the three sequences coincide, which happens in some important special cases). Motivated by \cite{lee2013efficient,APPROX}, in Section~\ref{sec-eie} we give an equivalent form of Algorithm~\ref{alg:general}, which under some structural assumptions on $f$ (see~\eqref{eq-efff}) does not require such full-dimensional operations. This is important as the efficiency of coordinate descent methods largely stems from their ability to decompose the problem into  subproblems, in an iterative fashion, of much smaller size than is the size of the original problem.

\subsection{Outline of the paper}

The paper is organized as follows. In Section~\ref{sec-algo} we establish  notation, describe the ALPHA algorithm and comment on the key assumption: Expected Separable Overapproximation (ESO). We defer the in-depth study ESO inequalities to a companion paper \cite{paper2}. In Section~\ref{sec-smooth} we give a simple complexity proof of ALPHA in the smooth case ($\psi=0$).
Subsequently, in Section~\ref{sec-specialcasesm} we present four algorithms  that ALPHA reduces to in  special cases, and state the corresponding complexity results, which follow from our general result, in a simplified form.  We do this for the benefit of the reader. In Section~\ref{sec-eie} we provide an equivalent form of writing ALPHA--one leading to an efficient implementation avoiding full dimensional operations.  In Section~\ref{sec-proximalmin} we state and prove the convergence result of ALPHA when applied to the general proximal minimization problem \eqref{eq-probse}. Finally, in Section~\ref{sec:conclusion} we conclude and in the appendix we comment on several special cases ALPHA reduces to in the general proximal setup.
  

\section{The Algorithm}\label{sec-algo}

In this section we first formalize the block structure of $\R^N$ and establish necessary notation (Section~\ref{subsec:alg_ESO}), then proceed to describing the ALPHA algorithm (Section~\ref{sec-desalgo}) and finally comment in the key assumption needed for our complexity results (Section~\ref{subsec:alg_ESO}).

\subsection{Preliminaries} \label{subsec:alg_prelim}

\textbf{Blocks.} We first describe the block setup which has become standard in the analysis of block coordinate descent methods~\cite{Nesterov:2010RCDM, UCDC, PCDM,APPROX}.
The space $\R^N$ is decomposed into $n$
subspaces: $\R^N=\R^{N_1}\times\dots\times \R^{N_n}$. Let $\bU$ be the $N\times N$ identity matrix and $\bU=[\bU_1,\dots,\bU_n]$ be its decomposition into column submatrices 
$\bU_i\in \R^{N\times N_i}$. 
For $x\in \R^N$, let $x^i \in \R^{N_i}$ be the block of coordinates corresponding to the columns of $\bU_i$, i.e., $x^i=\bU_i^\top x$.
For any $h\in \R^N$ and $S\subseteq[n]\eqdef\{1,\dots,n\}$ we define:
\begin{equation}\label{eq:decomposition_of_x}
h_{[S]}\eqdef \sum_{i\in S} \bU_i h^i.
\end{equation}
For function $f$, we denote by $\nabla f(x)$ the gradient of $f$ at point $x\in\R^N$
 and by $\nabla_i f(x)\in \R^ {N_i}$ the block of partial derivatives $\nabla_i f(x)=\bU_i^\top \nabla f(x)$.

\textbf{Norms.} The standard Euclidean inner product (with respect to the standard basis) in spaces $\R^N$ and $\R^{N_i}$, $i\in [n]$, 
will be denoted  by $\ve{\cdot}{\cdot}$. That is, for vectors $x,y$ of equal size, we have $\ve{x}{y} = x^\top y$.  Each space $\R^{N_i}$ is equipped with a Euclidean norm:
\begin{equation}\label{block_norm}\|x^i\|_i^2 \eqdef \ve{\bB_ix^i }{x^ i} = (x^i)^\top \bB_i x^i,
\end{equation}
where $\bB_i$ is an $N_i$-by-$N_i$ positive definite matrix. 
For $w\in \R_{++}^n$ and $x,y\in \R^N$ we further define
\begin{align}\label{a-wproduct}
\ve{x}{y}_{w}\eqdef \sum_{i=1}^n w_i\<x^i,y^i> 
\end{align}
and 
\begin{align}\label{a-norm}
\|x\|_{w}^2\eqdef{\sum_{i=1}^n w_i \|x^ i\|_i^2} \overset{\eqref{block_norm}}{=} \sum_{i=1}^n w_i \< \bB_i x^i, x^i >.
\end{align}
For $x\in \R^N$, by $\bB x$ we mean the vector $\bB x = \sum_{i=1}^n \bU_i \bB_i x^i$. That is, $\bB x$ is the vector in $\R^N$ whose $i$th block is equal to $\bB_i x^i$. For vectors $x,y\in \R^N$ we have
\begin{equation}\label{eq:suih9s8hs}\|x+y\|_w^2 = \|x\|_w^2 + 2\ve{\bB x}{y}_w + \|y\|_w^2.\end{equation}

\textbf{Vectors.} For any two  vectors  $x$ and $y$ of the same size, we denote by $x\circ y$ their Hadamard (i.e., elementwise) product.  By abuse of notation, we denote by $u^2$ the elementwise square of the vector $u$, by $u^{-1}$ the elementwise inverse of 
 vector $u$ and  by $u^{-2}$ the elementwise square of $u^{-1}$.  For vector $v\in \R^n$ and $x\in \R^N$ we will write
\begin{equation}\label{eq:u9s8y9sh0j0909}v\cdot x \eqdef \sum_{i=1}^n v_i \bU_i x^i.\end{equation}
That is, $v\cdot x$ is the vector obtained from $x$ by multiplying its block $i$ by $v_i$ for each $i\in [n]$. If all blocks are of size one ($N_i=1$ for all $i$), then $v\cdot x = \Diag(v)x$ where $\Diag(v)$ is the diagonal matrix with diagonal vector $v$.

\subsection{ALPHA}\label{sec-desalgo}

In this section we describe ALPHA (Algorithm~\ref{alg:general}) -- a randomized block coordinate descent method for solving \eqref{eq-probse}.

We denote by $\dom \psi$ the domain of the proximal term $\psi$.

\begin{algorithm}[H]
\begin{algorithmic}[1]
\STATE \textbf{Parameters:}  proper  sampling $\hat S$ with probability vector $p=(p_1,\dots,p_n)$,   $v \in \R^n_{++}$, sequence $\{\theta_k\}_{k\geq 0}$
\STATE \textbf{Initialization:} {choose $x_0\in \dom \psi$ and set  $z_0=x_0$}
\FOR{$k\geq 0$} 
\STATE $y_k=(1-\theta_k)x_k+\theta_kz_k$ \\
\STATE Generate a random set of blocks $S_k\sim \hat S $\\
\STATE $z_{k+1}\leftarrow z_k$
\FOR{ $i\in S_k$} 
\STATE
$
z_{k+1}^i=\argmin_{z\in \R^{N_i}} \big\{\< \nabla_i f(y_k), z>+\frac{\theta_k  v_i }{2p_i} \|z-z_k^i\|_i^2+\psi^i(z) \big\}
$ \\
\ENDFOR\\
\STATE $x_{k+1}=y_k+\theta_k p^{-1}\cdot  (z_{k+1}-z_k)$
\ENDFOR
\end{algorithmic}
\caption{ALPHA}
\label{alg:general}
\end{algorithm}

To facilitate the presentation, we first recall some basic facts and terminology related to samplings (for a broader coverage see~\cite{PCDM,APPROX,QUARTZ, paper2}. A sampling $\hat{S}$ is a random set valued mapping with values in $2^{[n]}$.  We say that sampling $\hat{S}$ is
  {\em nil} if $\Prob(\hat{S}=\emptyset)=1$, 
{\em proper} if $\Prob(i \in \hat{S})>0$ for all $i \in [n]$, {\em uniform} if  $\Prob(i \in \hat{S})=\Prob(i' \in \hat{S})$ for all $i,i'\in [n]$ and {\em serial} if $\Prob(|\hat S|= 1)=1$.  The probability that the block $i$ is chosen is denoted by:
\begin{equation}
\label{eq:p_i}p_i\eqdef \Prob(i\in \hat{S}), \qquad i\in [n].\end{equation}
It is easy to see that if $\hat S$ is uniform, then $p_i=\Exp[|\hat S|]/{n}$, for $i\in[n]$.

Algorithm~\ref{alg:general} starts from an initial vector $x_0\in \R^ N$ and generates three sequences $\{x_k,y_k,z_k\}_{k\geq 0}$.
At  iteration $k$, a random subset of blocks, $S_k\subseteq[n]$, is generated according to the distribution of sampling $\hat S$ (a parameter to enter
at the beginning of the algorithm). In order to guarantee that each block has a nonzero probability to be selected, it is necessary to assume that $\hat{S}$ is {\em proper}. 

To move from $z_k$ to $z_{k+1}$,
we only need to evaluate $|S_k|$ partial derivatives
 of $f$ at point $y_k$ and update only the blocks of $z_k$ belonging to $S_k$ 
to the solutions of $|S_k|$ proximal problems (Steps 6 to 8). 
The vector $x_{k+1}$ is obtained from $y_k$ by changing only the blocks of $y_k$ belonging to $S_k$ (Step 10). The vector 
$y_k$ is a convex combination of $x_k$ and $z_k$ (Step 4) with coefficient $\theta_k$, a parameter to be chosen between $(0,1]$. 
Note that unless $\theta_k p_i=1$ for all $i \in [n]$ and $k\in \bN$, in which case the three sequences $\{x_k,y_k,z_k\}_{k\geq 0}$ reduce to one same sequence, the update in Step 4 requires a full-dimensional vector operation, as previously remarked in~\cite{Nesterov:2010RCDM, APPROX}.
In Section~\ref{sec-eie} we will provide an equivalent form of 
Algorithm~\ref{alg:general}, which avoids full-dimensional vector operations for special forms of $f$.

Let us extract the relations between the three sequences.
Define
\begin{align}\label{a-tildezk}
\tilde z_{k+1}\eqdef \displaystyle\argmin_{z\in \R^{N}}\{\<\nabla f(y_k),z>+\frac{\theta_k}{2}\|z-z_k\|^2_{p^{-1}\circ v}+ \psi(z)\}.
\end{align}
Then
\begin{align}\label{a-zk21}
z_{k+1}^i=\left\{\begin{array}{ll} \tilde z_{k+1}^i & i\in S_k \\ z_k^i & i\notin S_k\end{array}\right. ,
\end{align}
and hence $z_{k+1}-z_k = (\tilde{z}_{k+1}-z_k)_{[S_k]}$
and
\begin{align}\label{a-xkplus1yk}
 x_{k+1}=y_k+\theta_k p^{-1}\cdot  (\tilde z_{k+1}-z_k)_{[S_k]} .
\end{align}
Note also that from the definition of $y_k$ in Algorithm~\ref{alg:general}, we have:
\begin{align}\label{a-xkykzk}
 \theta_k(y_k-z_k)=(1-\theta_k)(x_k-y_k).
\end{align}

\subsection{Expected separable overapproximation} \label{subsec:alg_ESO}

To guarantee the convergence of Algorithm~\ref{alg:general}, we shall require that $f$ admits an expected separable overapproximation (ESO) with respect to the sampling $\hat S$ with parameter $v\in \R_{++}^n$:
\begin{assump}[ESO assumption]\label{ass-eso} Let $\hat S$ be a sampling and $v\in \R^n_{++}$ a vector of positive weights. We say that the function $f$ admits an ESO with respect to $\hat{S}$ with parameter $v$, denoted as $(f,\hat S)\sim ESO(v)$, if
the following inequality holds for  all $x,h\in \R^ N$,
\begin{align}\label{a-eso}
 \E[f(x+h_{[\hat S]})]\leq f(x)+\<\nabla f(x),h >_p+\frac{1}{2}\|h\|_{v\circ p}^2,
\end{align}
where $p=(p_1,\dots,p_n)\in\R^n$ is the vector of probabilities associated with $\hat S$ defined in~\eqref{eq:p_i}.
\end{assump} 

Looking behind the compact notation in which the assumption is formulated, observe  that the upper bound is a quadratic function of $h=(h^1,\dots,h^n)$, separable in the blocks $h^i$:
\[f(x)+\<\nabla f(x),h >_p+\frac{1}{2}\|h\|_{v\circ p}^2 \;\; \overset{\eqref{a-wproduct}}{=}\;\; f(x) + \sum_{i=1}^n p_i \left(  \< \nabla_i f(x),  h^i> +  v_i \<\bB_i h^i, h^i >\right).\]

As a tool for the design and analysis of randomized coordinate descent methods, ESO was first formulated in~\cite{PCDM} for the complexity study of  parallel coordinate descent method (PCDM). It  is a powerful technical tool which provides a generic approach to establishing the convergence of  randomized coordinate descent methods of many flavours~\cite{PCDM, Pegasos2, SPCDM, DQA, NSync, Hydra, Hydra2, QUARTZ}. As shown in the listed papers as well as our results which follow,  the convergence of Algorithm~\ref{alg:general} can be established for arbitrary sampling $\hat S$ as long as the parameter vector $v$ is chosen such that $(f,\hat S)\sim ESO(v)$.
Moreover, the vector $v$ appears in the convergence result and directly influences the complexity of the method.

 Since~\cite{PCDM}, the problem of computing efficiently a vector $v$ such that the ESO assumption~\ref{ass-eso} holds has been addressed  in many papers for special uniform samplings relevant to practical implementation including serial sampling, $\tau$-nice sampling~\cite{PCDM,APPROX} and distributed sampling~\cite{Hydra,Hydra2}, and also for a particular example of nonuniform parallel sampling~\cite{NSync}. In this paper we focus on the complexity analysis of Algorithm~\ref{alg:general} and refer the reader to the companion paper~\cite{paper2} for 
 a systematic study of  the computation of admissible vector parameter $v$ for arbitrary sampling $\hat S$.

\section{Simple complexity analysis in the smooth case}\label{sec-smooth}

In this section we give a brief  complexity analysis of ALPHA in the case when $\psi\equiv 0$; that is, when applied to the following unconstrained smooth convex minimization problem:
\begin{equation}\label{eq-prob-smooth}
\begin{array}{ll}
 \mathrm{minimize} &  f(x)\\
\mathrm{subject~to~} & x=(x^1,\dots,x^n)\in \R^{N_1}\times\dots\times \R^{N_n}=\R^N,
\end{array}
\end{equation}

While our general theory, which we develop in Section~\ref{sec-proximalmin}, covers also this special case, the analysis we present here is different and simpler. When applied to problem \ref{eq-prob-smooth}, Step 8 in ALPHA has an explicit solution and the method reduces to Algorithm~\ref{alg:smooth}.

\begin{algorithm}[H]
\begin{algorithmic}[1]
\STATE \textbf{Parameters:}  proper  sampling $\hat S$ with probability vector $p=(p_1,\dots,p_n)$,  vector $v \in \R^n_{++}$
, sequence $\{\theta_k\}_{k\geq 0}$
\STATE \textbf{Initialization:} {choose $x_0\in \R^N$, set $z_0=x_0$}
\FOR{$k\geq 0$} 
\STATE $y_k=(1-\theta_k)x_k+\theta_kz_k$ \\
\STATE Generate a random set of blocks $S_k\sim \hat S $\\
\STATE $z_{k+1}\leftarrow z_k$
\FOR{ $i\in S_k$} 
\STATE
$
z_{k+1}^i = z_k^i - \frac{p_i}{v_i \theta_k} \bB_i^{-1}
\nabla_i f(y_k)$ \\
\ENDFOR\\
\STATE $x_{k+1}=y_k+\theta_k p^{-1}\cdot (z_{k+1}-z_k) $
\ENDFOR
\end{algorithmic}
\caption{ ALPHA specialized to the smooth minimization problem~\eqref{eq-prob-smooth}}
\label{alg:smooth}
\end{algorithm}

We now state the complexity result for ALPHA (Algorithm~\ref{alg:smooth}) in its nonaccelerated variant.

\begin{theo}[ALPHA -- smooth \& nonaccelerated]\label{th-smooth_normal} 
Let $\hat{S}$ be an arbitrary proper sampling and $v\in \R^n_{++}$ be such that $(f,\hat S)\sim ESO(v)$. 
Choose $\theta_k=\theta_0 \in (0,1]$ for all $k\geq 0$. Then for any $y\in \R^N$, the iterates $\{x_k\}_{k\geq 1}$ of Algorithm~\ref{alg:smooth} satisfy: 
\begin{equation}\label{eq-unifijhgsu87s}
\max\left\{ \E[f(\hat{x}_{k})] , \min_{l=1,\dots,k} \E[f(x_{l})]\right\} -f(y) \leq  
\displaystyle\frac{C}{(k-1)\theta_0+1}  \enspace , \forall k\geq 1
\end{equation}
where 
\[\hat{x}_k = \frac{x_k + \theta_0\sum_{l=1}^{k-1} x_l}{1+ (k-1)\theta_0}\]
and 
$$
C = (1-\theta_0)\left(f(x_0)-f(y)\right)+\frac{\theta_0^2}{2}\|x_0-y\|^2_{v\circ p^{-2}}.
$$
In particular, if we choose $\theta_0=\min_i p_i$, then for all $k\geq 1$,
\begin{equation}\label{eq-unifi9878}
\max\left\{\E\left[f\left( \hat{x}_k \right)\right]  , \min_{l=1,\dots, k} \E\left[f(x_{l})\right]\right\} -f(y) \leq  
\frac{(1-\min_i p_i)\left(f(x_0)-f(y)\right)+\frac{1}{2}\|x_0-y\|^2_{ v}}{(k-1)\min_i p_i+1} .\enspace
\end{equation}
\end{theo}

The next result gives a complexity bound for ALPHA (Algorithm~\ref{alg:smooth}) in its accelerated variant.

\begin{theo}[ALPHA -- smooth and accelerated] \label{th-smooth_fast}  
Let $\hat{S}$ be an arbitrary proper sampling and $v\in \R^n_{++}$ be such that $(f,\hat S)\sim ESO(v)$. Choose $\theta_0\in (0,1]$ and define the sequence $\{\theta_k\}_{k\geq 0}$ by \begin{align}\label{a-thetak+1}\theta_{k+1}=\frac{\sqrt{\theta_k^4+4\theta_k^2}-\theta_k^2}{2}.\end{align} Then for any $y\in \R^N$ such that $C\geq 0$, the iterates $\{x_k\}_{k\geq 1}$ of Algorithm~\ref{alg:smooth} satisfy: 
\begin{equation}\label{eq-unifijhgsu8756756s}
\E[f(x_{k})] -f(y)\leq  
\displaystyle\frac{4C}{((k-1)\theta_0+2)^2} , \enspace
\end{equation}
where 
$$
C = (1-\theta_0)\left(f(x_0)-f(y)\right)+\frac{\theta_0^2}{2}\|x_0-y\|^2_{v\circ p^{-2}}.
$$
In particular, if we choose $\theta_0=1$, then for all $k\geq 1$,
\begin{equation}\label{eq-unifidfg9878}
\E[f(x_{k})] -f(y) \leq  
\displaystyle\frac{2\|x_0-y\|^2_{v\circ p^{-2}}}{(k+1)^2}  = \frac{2\sum_{i=1}^n \frac{v_i}{p_i^2}\|x_0^i-y^i\|_i^2}{(k+1)^2}\enspace .
\end{equation}
\end{theo}

In the rest of this section, we provide a short proof of Theorems~\ref{th-smooth_normal} and~\ref{th-smooth_fast}. In Section~\ref{sec-proximalmin} we shall present complexity bounds (Theorem~\ref{th-proximal_normal} and~\ref{th-proximal_fast}) for ALPHA (Algorithm~\ref{alg:general}) as applied to the general regularized problem \eqref{eq-probse}. The proof in the general case is more involved, which is why we prefer to present the smooth case first and also provide a separate briefer proof.

\subsection{Two Lemmas}

We first establish two lemmas and then proceed directly to the proofs of the theorems.

\begin{lemma}\label{lem:expect} For any sampling $\hat{S}$ and any $x,a\in \R^N$ and $w\in \R^n_{++}$, the following identity holds:
\[\|x\|_w^2 - \E \left[\|x+a_{[\hat{S}]}\|_w^2\right]  = \|x\|_{w\circ p}^2 - \|x+a\|_{w\circ p}^2 .\]
\end{lemma}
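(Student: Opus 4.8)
The identity is purely computational, so the plan is simply to expand both sides in the block decomposition and match them term by term. First I would write out the $i$-th block of the perturbed vector: by the definition~\eqref{eq:decomposition_of_x} of $a_{[\hat S]}$, we have $(x+a_{[\hat S]})^i = x^i + a^i$ whenever $i\in\hat S$, and $(x+a_{[\hat S]})^i = x^i$ otherwise. Hence, using the definition~\eqref{a-norm} of $\|\cdot\|_w$,
\[
\|x+a_{[\hat S]}\|_w^2 \;=\; \sum_{i\in\hat S} w_i\,\|x^i+a^i\|_i^2 \;+\; \sum_{i\notin\hat S} w_i\,\|x^i\|_i^2 \;=\; \sum_{i=1}^n w_i\Big(\mathbf{1}_{(i\in\hat S)}\,\|x^i+a^i\|_i^2 + \mathbf{1}_{(i\notin\hat S)}\,\|x^i\|_i^2\Big).
\]

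Next I would take expectation over $S_k\sim\hat S$. Since $\Prob(i\in\hat S)=p_i$ by~\eqref{eq:p_i}, linearity of expectation gives
\[
\E\big[\|x+a_{[\hat S]}\|_w^2\big] \;=\; \sum_{i=1}^n w_i\Big(p_i\,\|x^i+a^i\|_i^2 + (1-p_i)\,\|x^i\|_i^2\Big).
\]
Subtracting this from $\|x\|_w^2 = \sum_{i=1}^n w_i\|x^i\|_i^2$, the $(1-p_i)\|x^i\|_i^2$ terms cancel the corresponding part of $\|x\|_w^2$, leaving
\[
\|x\|_w^2 - \E\big[\|x+a_{[\hat S]}\|_w^2\big] \;=\; \sum_{i=1}^n w_i p_i\Big(\|x^i\|_i^2 - \|x^i+a^i\|_i^2\Big).
\]
Finally I would recognize the right-hand side as $\|x\|_{w\circ p}^2 - \|x+a\|_{w\circ p}^2$, again by~\eqref{a-norm} applied with the weight vector $w\circ p$, which completes the proof.

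There is essentially no obstacle here: the only thing to be careful about is the bookkeeping of the block structure — that applying $a_{[\hat S]}$ perturbs exactly the blocks in $\hat S$ and leaves the others untouched — together with the elementary fact that the coordinatewise expectation of the indicator $\mathbf{1}_{(i\in\hat S)}$ equals $p_i$. No properness or any further structural assumption on $\hat S$ is needed, which is why the lemma is stated for an arbitrary sampling.
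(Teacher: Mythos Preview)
Your proof is correct and follows exactly the same approach as the paper's: compute $\E\big[\|x+a_{[\hat S]}\|_w^2\big] = \sum_{i=1}^n w_i\big(p_i\|x^i+a^i\|_i^2 + (1-p_i)\|x^i\|_i^2\big)$ and subtract. The paper simply states this expectation in one line and leaves the rest implicit, whereas you spell out the intermediate steps.
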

\begin{proof}
It is sufficient to notice that $
\E \left[\|x+a_{[\hat{S}]}\|_w^2\right] 
=\sum_{i=1}^n \left[ (1-p_i)w_i\|x^i\|_i^2+p_iw_i\|x^i+a^i\|_i^2\right].$
\end{proof}

\begin{lemma} \label{lem:recursion}  Let $\hat{S}$ be an arbitrary proper sampling and  $v\in \R^n_{++}$ be such that $(f,\hat{S})\sim ESO(v)$. Let $\{\theta_k\}_{k\geq 0}$ be an arbitrary  sequence of positive numbers in $(0,1]$ and fix $y\in \R^N$. Then for the sequence of iterates produced by Algorithm~\ref{alg:smooth} and all $k\geq 0$, the  following recursion holds: 
\begin{equation}\label{eq-pr12}
\E_k \left[f(x_{k+1}) + \frac{\theta_k^2}{2}\| z_{k+1}-y\|^2_{ v\circ p^{-2}}\right] \leq 
\left[ f(x_k) +
\frac{\theta_k^2}{2}\|z_k-y\|^2_{ v \circ p^{-2}}\right] - \theta_k (f(x_k) - f(y)) \enspace.
\end{equation}
\end{lemma}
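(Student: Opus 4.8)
The plan is to run the by-now standard ``linear coupling'' argument for accelerated coordinate descent, adapted to an arbitrary sampling by invoking the ESO inequality~\eqref{a-eso}, and to combine it with Lemma~\ref{lem:expect} to handle the $z$-distances. To set up, I would introduce the deterministic full-dimensional update $\tilde z_{k+1}$ of~\eqref{a-tildezk} specialized to $\psi\equiv 0$, i.e. the minimizer of the strongly convex quadratic $\phi(z)\eqdef\<\nabla f(y_k),z>+\frac{\theta_k}{2}\|z-z_k\|^2_{v\circ p^{-1}}$, which is explicitly $\tilde z_{k+1}^i=z_k^i-\frac{p_i}{\theta_k v_i}\bB_i^{-1}\nabla_i f(y_k)$. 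By~\eqref{a-zk21}--\eqref{a-xkplus1yk} we then have $z_{k+1}-z_k=(\tilde z_{k+1}-z_k)_{[S_k]}$ and $x_{k+1}=y_k+h_{[S_k]}$ with $h\eqdef\theta_k p^{-1}\cdot(\tilde z_{k+1}-z_k)$; here $y_k$, $z_k$, $\tilde z_{k+1}$, $h$ are all determined by the iterates up to step $k$ and only $S_k$ is random, so ESO and Lemma~\ref{lem:expect} may be applied conditionally.

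Next I would apply Assumption~\ref{ass-eso} with $x=y_k$ and this $h$, then take $\E_k$. Unravelling the weighted inner product and norm (using $\<\nabla f(y_k),h>_p=\theta_k\<\nabla f(y_k),\tilde z_{k+1}-z_k>$ and $\|h\|^2_{v\circ p}=\theta_k^2\|\tilde z_{k+1}-z_k\|^2_{v\circ p^{-1}}$) gives
\[\E_k[f(x_{k+1})]\le f(y_k)+\theta_k\<\nabla f(y_k),\tilde z_{k+1}-z_k>+\tfrac{\theta_k^2}{2}\|\tilde z_{k+1}-z_k\|^2_{v\circ p^{-1}}.\]
Since $\phi$ is quadratic with block-diagonal Hessian equal to $\theta_k$ times the $(v\circ p^{-1})$-weighting, the exact identity $\phi(z)=\phi(\tilde z_{k+1})+\frac{\theta_k}{2}\|z-\tilde z_{k+1}\|^2_{v\circ p^{-1}}$ holds for every $z\in\R^N$; rearranged and multiplied by $\theta_k$ it reads
\[\theta_k\<\nabla f(y_k),\tilde z_{k+1}-z_k>+\tfrac{\theta_k^2}{2}\|\tilde z_{k+1}-z_k\|^2_{v\circ p^{-1}}=\theta_k\<\nabla f(y_k),z-z_k>+\tfrac{\theta_k^2}{2}\|z-z_k\|^2_{v\circ p^{-1}}-\tfrac{\theta_k^2}{2}\|z-\tilde z_{k+1}\|^2_{v\circ p^{-1}},\]
and I would take $z=y$.

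Then I would bound the linear term: writing $\theta_k\<\nabla f(y_k),y-z_k>=\theta_k\<\nabla f(y_k),y-y_k>+\theta_k\<\nabla f(y_k),y_k-z_k>$, substituting $\theta_k(y_k-z_k)=(1-\theta_k)(x_k-y_k)$ from~\eqref{a-xkykzk}, and using convexity of $f$ on both inner products gives $\theta_k\<\nabla f(y_k),y-z_k>\le\theta_k f(y)+(1-\theta_k)f(x_k)-f(y_k)$. Plugging this into the bound on $\E_k[f(x_{k+1})]$, the $f(y_k)$ terms cancel and
\[\E_k[f(x_{k+1})]\le(1-\theta_k)f(x_k)+\theta_k f(y)+\tfrac{\theta_k^2}{2}\|z_k-y\|^2_{v\circ p^{-1}}-\tfrac{\theta_k^2}{2}\|\tilde z_{k+1}-y\|^2_{v\circ p^{-1}}.\]
Finally I would add $\frac{\theta_k^2}{2}\E_k[\|z_{k+1}-y\|^2_{v\circ p^{-2}}]$ to both sides and apply Lemma~\ref{lem:expect} with $x=z_k-y$, $a=\tilde z_{k+1}-z_k$, $w=v\circ p^{-2}$ (so $w\circ p=v\circ p^{-1}$), obtaining $\E_k[\|z_{k+1}-y\|^2_{v\circ p^{-2}}]=\|z_k-y\|^2_{v\circ p^{-2}}-\|z_k-y\|^2_{v\circ p^{-1}}+\|\tilde z_{k+1}-y\|^2_{v\circ p^{-1}}$; the two $\|z_k-y\|^2_{v\circ p^{-1}}$ terms cancel, the two $\|\tilde z_{k+1}-y\|^2_{v\circ p^{-1}}$ terms cancel, and what remains is precisely $f(x_k)-\theta_k(f(x_k)-f(y))+\frac{\theta_k^2}{2}\|z_k-y\|^2_{v\circ p^{-2}}$, the right-hand side of~\eqref{eq-pr12}.

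The main obstacle is bookkeeping rather than any deep idea: one must keep the three weightings $v\circ p$, $v\circ p^{-1}$, $v\circ p^{-2}$ carefully apart and verify that the $\|\cdot\|^2_{v\circ p^{-1}}$ quantities produced respectively by ESO, by the exact quadratic expansion of $\phi$ around its minimizer, and by Lemma~\ref{lem:expect} cancel exactly; the remaining care goes into the conditional-expectation argument and the choice $z=y$ of the prox-reference point in the quadratic identity.
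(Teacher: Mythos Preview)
Your proposal is correct and follows essentially the same route as the paper: apply ESO at $y_k$ with $h=\theta_k p^{-1}\cdot(\tilde z_{k+1}-z_k)$, convert the resulting expression into the form $(1-\theta_k)f(x_k)+\theta_k f(y)+\tfrac{\theta_k^2}{2}\|z_k-y\|^2_{v\circ p^{-1}}-\tfrac{\theta_k^2}{2}\|\tilde z_{k+1}-y\|^2_{v\circ p^{-1}}$ via the relation~\eqref{a-xkykzk} and convexity, and then invoke Lemma~\ref{lem:expect} with $w=v\circ p^{-2}$. The only cosmetic difference is in the middle algebraic step: the paper substitutes the explicit formula $\nabla f(y_k)=-\theta_k(v\circ p^{-1})\cdot\bB(\tilde z_{k+1}-z_k)$ to collapse the ESO bound to $f(y_k)-\tfrac12\|\tilde z_{k+1}-z_k\|^2_{t}$ and then uses a polarization identity, whereas you keep the gradient explicit and appeal to the exact second-order expansion of the quadratic $\phi$ about its minimizer; these are two packagings of the same three-point identity and lead to the identical intermediate inequality.
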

\begin{proof} 
Based on line 8 of Algorithm~\ref{alg:smooth}, we can write
\begin{equation}\label{eq:98hs98hss}a\eqdef \tilde{z}_{k+1}-z_k = - \theta_{k}^{-1}(v^{-1}\circ p) \cdot \bB^{-1}\nabla f(y_k),\end{equation} or equivalently,
$- \nabla f(y_k) = \theta_k (v\circ p^{-1}) \cdot\bB a$. Using this notation, the update on line 10 of Algorithm~\ref{alg:smooth} can be written as
\begin{equation}\label{eq:09us09us}x_{k+1}= y_k + \theta_k p^{-1}\cdot a_{[S_k]}  = y_k + (\theta_k p^{-1}\cdot a)_{[S_k]}.
\end{equation}
Letting $b=\tilde{z}_{k+1}-y$ and $t=\theta_k^2 ( v \circ p^{-1})$, we apply the ESO assumption and rearrange the result:
\begin{eqnarray}\notag
\E_k[f(x_{k+1})]\notag & \overset{\eqref{a-eso}+\eqref{eq:09us09us}}{\leq}& f(y_k) + \<\nabla f(y_k), \theta_k p^{-1}\cdot a>_{p}+ \frac{1}{2}\|\theta_k p^{-1} \cdot a\|_{ v\circ p}^2\\
\notag &\overset{\eqref{a-wproduct} + \eqref{a-norm} +\eqref{eq:98hs98hss}}{=}&  f(y_k) - \frac{1}{2}\|a\|_t^2\\
 &\overset{\eqref{eq:suih9s8hs}}{=} &  f(y_k) - \frac{1}{2}\|b\|_t^2 + \frac{1}{2}\|b-a\|_t^2 + \<\bB a,b-a>_t.\label{eq:jkskd7d5}
\end{eqnarray}
Note that $\|b\|_t^2 = \theta_k^2 \|\tilde{z}_{k+1}-y\|_{v\circ p^{-1}}^2$, $\|b-a\|_t^2 = \theta_k^2\|z_k-y\|_{v\circ p^{-1}}^2$ and
\begin{eqnarray}\notag
\<\bB a,b-a>_t &=& \< -\bB a,a-b >_t \;\;  = \;\; \< \theta_{k}^{-1}(v^{-1}\circ p)\cdot \nabla f(y_k), y-z_k>_t \\
\notag & =& \theta_k \<\nabla f(y_k), y-z_k>\;\;  \overset{\eqref{a-xkykzk}}{=}\;\; \theta_k \ve{\nabla f(y_k)}{ y-y_k}+(1-\theta_k) \ve{\nabla f(y_k)}{ x_k-y_k}\\
\notag &\leq & \theta_k (f(y)-f(y_k)) + (1-\theta_k) (f(x_k)-f(y_k)).
\end{eqnarray}
Substituting  these expressions to \eqref{eq:jkskd7d5}, we obtain the recursion:
\begin{equation}\label{eq:siusuhs}
\E_k[f(x_{k+1})] \leq 
\theta_k f(y)+(1-\theta_k) f(x_k) +
\frac{\theta_k^2}{2}\|z_k-y\|^2_{ v \circ p^{-1}}-\frac{\theta_k^2}{2}\|\tilde z_{k+1}-y\|^2_{ v\circ p^{-1}}.
\end{equation}
It now only remains to apply Lemma~\ref{lem:expect} to the last two terms in \eqref{eq:siusuhs},  with $x\leftarrow z_k-y$, $w\leftarrow v\circ p^{-2}$ and $\hat{S}\leftarrow S_k$, and rearrange the resulting inequality.
\end{proof}

\subsection{Proof of Theorem~\ref{th-smooth_normal} }\label{sec-th-smooth-normal}

Using the fact that $\theta_k=\theta_0$, for all $k$ and taking expectation in both sides of \eqref{eq-pr12}, we obtain the recursion
$$
 \phi_{k+1} + \theta_0^2 r_{k+1} 
\leq (1-\theta_0)\phi_k + \theta_0^2 r_k,\quad k\geq 0, 
$$
where $\phi_{k}\eqdef \E[f(x_k)]-f(y)$ and $r_k \eqdef \tfrac{1}{2}\E[\|z_{k}-y\|^2_{v\circ p^{-2}}]$. Combining these inequalities, we get
\begin{equation}\label{eq:0s9j098d}(1+\theta_0(k-1)) \min_{l=1,\dots,k} \phi_l \leq \phi_k + \theta_0 \sum_{l=1}^{k-1} \phi_l \leq (1-\theta_0)\phi_0 + \theta_0^2 r_0.\end{equation}
Let $\alpha_k = 1+(k-1)\theta_0$. By convexity,
\[f(\hat{x}_k) = f\left(\frac{x_k + \sum_{l=1}^{k-1}\theta_0 x_l}{\alpha_k}\right) \leq \frac{f(x_k) + \sum_{l=1}^{k-1}\theta_0 f(x_l)}{\alpha_k}.\]
Finally, subtracting $f(y)$ from both sides  and taking expectations,  we obtain
\[\E[f(\hat{x}_k)]-f(y) \leq \frac{\phi_k + \sum_{l=1}^{k-1}\theta_0 \phi_l}{\alpha_k} \overset{\eqref{eq:0s9j098d}}{\leq} \frac{(1-\theta_0)\phi_0 +\theta_0^2 r_0}{\alpha_k}.\]

\subsection{Proof of Theorem~\ref{th-smooth_fast} }\label{sec-th-smooth-fast}

If $\theta_0\in (0,1]$, then the  sequence $\{\theta_k\}_{k\geq 0}$ has the following properties (see~\cite{tseng2008accelerated}):
\begin{align}\label{a-thetak20}
0< \theta_{k+1}\leq \theta_k\leq \frac{2}{k+2/\theta_0} \leq 1,\\\label{a-thetak30}
\frac{1-\theta_{k+1}}{\theta_{k+1}^2}=\frac{1}{\theta_k^2}.
\end{align}
After dividing both sides of~\eqref{eq-pr12} by $\theta_k^2$,  using~\eqref{a-thetak30} and taking expectations, we obtain:
\begin{equation}\label{a-eklf}
\frac{1-\theta_{k+1}}{\theta_{k+1}^2} \phi_{k+1} +r_{k+1}
\leq \frac{1-\theta_k}{\theta_k^2} \phi_k + r_k \leq \frac{1-\theta_0}{\theta_0^2} \phi_0 + r_0,
\end{equation}
where $\phi_{k}$ and $r_k$ are as in the  proof of Theorem~\ref{th-smooth_normal}.  Finally,
\begin{eqnarray*}\phi_k &\overset{\eqref{a-thetak30}}{=} & \frac{(1-\theta_k)\theta_{k-1}^2}{\theta_k^2} \phi_k 
\;\;\leq\;\;  \frac{(1-\theta_k)\theta_{k-1}^2}{\theta_k^2} \phi_k + \theta_{k-1}^2 r_k \;\; \overset{\eqref{a-eklf}}{\leq} \;\; \frac{(1-\theta_0)\theta_{k-1}^2}{\theta_0^2} \phi_0 + \theta_{k-1}^2 r_0 \\
&= & \frac{\theta_{k-1}^2}{\theta_0^2}\left((1-\theta_0)\phi_0 + \theta_{0}^2 r_0\right)\;\; = \;\; \frac{\theta_{k-1}^2}{\theta_0^2} C\;\;\overset{\eqref{a-thetak20}}{\leq} \;\;
\frac{4C}{((k-1)\theta_0+2)^2}.\end{eqnarray*}
Note that in the last inequality we used the assumption that $C\geq 0$.

\section{The many variants of ALPHA (in the smooth case)}\label{sec-specialcasesm}

The purpose of this section is to demonstrate that ALPHA is a very flexible method, encoding several classical as well as modern optimization methods for special choices of the parameters of the method. In order to achieve this goal, it is enough to focus on the smooth case, i.e., on Algorithm~\ref{alg:smooth}. Similar reasoning can be applied to the  proximal case.

Note that in ALPHA we have the liberty to {\em choose} the sampling $\hat{S}$ and the sequence $\{\theta_k\}_{k\geq 0}$. 
 As we have already seen, by modifying the sequence we can obtain {\em simple} (i.e., nonaccelerated) and {\em accelerated} variants of the method. By the choice of the sampling, we can  force the method to be {\em deterministic} or {\em randomized}. In the latter case, there are many ways of choosing the distribution of the sampling. Here we will constrain ourselves to a basic classification between {\em uniform samplings} (samplings for which $p_i=p_{i'}$ for all $i\in [n]$) and non-uniform or {\em importance samplings}.
This is summarized in Table~\ref{tab-1}.

\begin{table}[!ht]
\footnotesize
\centering
\begin{tabular}{| >{\centering\arraybackslash}m{0.13\textwidth} | >{\centering\arraybackslash}m{0.14\textwidth}|>{\centering\arraybackslash}m{0.10\textwidth}|>{\centering\arraybackslash}m{0.12\textwidth}| 
>{\centering\arraybackslash}m{0.09\textwidth} | >{\centering\arraybackslash}m{0.17\textwidth} | }
\hline 
{Parameters of Algorithm~\ref{alg:smooth}}
 &	\multicolumn{3}{c}{$\hat S$} 
& \multicolumn{2}{|c|}{$\theta_k$} 
\\
\cline{1-6} \multirow{2}{*}{Setting}
&  \multirow{2}{*}{$|\hat S|=n$} & 
\multicolumn{2}{c|}{{$|\hat S|<n$ }} & \multirow{2}{*}{ \small$\theta_{k+1}=\theta_k$} &
\multirow{2}{*}{\small$\theta_{k+1}=\frac{\sqrt{\theta_k^4+4\theta_k^2}-\theta_k^2}{2}$}  \\ \cline{3-4}
 &  &
    $p_i=p_{i'}$, \small$ \forall i,i'\in[n]$ & $p_i \neq p_{i'}$, \small$ \exists i,i'\in[n]$  & & \\
\cline{1-6} \multirow{2}{*}{{Characteristic}} &
  \multirow{2}{*}{\small Deterministic} & 
\multicolumn{2}{c|}{\small{Randomized }} & \multirow{2}{*}{ \small Simple} &
\multirow{2}{*}{\small Accelerated}  \\ \cline{3-4}  &  &
    \small Uniform sampling & \small Importance sampling   & & \\ 
\hline 
\multicolumn{1}{c}{Special cases} &
\multicolumn{5}{c}{~~} \\ 
\hline  {Algorithm~\ref{alg:gd}} 
& \cmark &  & &  \cmark & \\
\cline{1-6} Algorithm~\ref{alg:accgd} 
&  \cmark &  & &   & \cmark \\
\cline{1-6}  
{Algorithm~\ref{alg:rcdm}} 
 & \cmark  & \cmark &  \cmark  & \cmark & \\
\cline{1-6} 
{Algorithm~\ref{alg:accsmooth}} 
& \cmark  &  \cmark & \cmark &   & \cmark \\
\hline 
\end{tabular}
\caption{Special cases of Algorithm~\ref{alg:smooth}.}\label{tab-1}
\end{table}

The deterministic variants of ALPHA (Algorithm~\ref{alg:gd} and~\ref{alg:accgd}) are obtained by choosing the sampling which always selects all blocks: $\hat S=[n]$ with probability 1.
The ESO assumption in this special  case has the form
\begin{align}\label{a-esofull1}
f(x+h)\leq f(x)+\ve{\nabla f(x)}{h}+\frac{1}{2}\|h\|_v^2,\enspace \forall x,h\in \R^N,
\end{align}
which simply requires the  gradient of $f$ to be $1$-Lipschitz with respect to the norm $\|\cdot\|_v$. Note that $\|h\|_v^2 = h^\top \tilde{\bB} h$, where $\tilde{\bB}$ is the block diagonal matrix defined by $\tilde \bB\eqdef \Diag(v_1\bB_1,\dots,v_n\bB_n)$. Likewise, if there is just a single block in our block setup (i.e., if $n=1$) then it is natural to only consider a sampling which picks this block with probability 1, which again results in a deterministic method. However, in this case the norm $\|\cdot\|_v$ can be  an arbitrary Euclidean norm (that is, it does not need to be block diagonal).

In the randomized variants of ALPHA (Algorithm~\ref{alg:rcdm} and~\ref{alg:accsmooth}) we allow for the sampling to have an  {\em arbitrary distribution}. 


\subsection{Special case 1: gradient descent}

By specializing Algorithm~\ref{alg:smooth} to the choice $\hat S=[n]$ and $\theta_k=1$ for all $k$, we obtain classical gradient descent (with fixed stepsize).  Indeed, note that in this special case we have
\begin{align}\label{a-xyz}
x_k=y_k=z_k,\qquad \forall k\geq 1.
\end{align}
Recall that the ESO assumption reduces to~\eqref{a-esofull1} when $\hat S=[n]$.
\begin{algorithm}[H]
\begin{algorithmic}[1]
\STATE \textbf{Parameters:} vector $v\in \R_{++}^ n$ such that~\eqref{a-esofull1} holds
\STATE \textbf{Initialization:} choose $x_0\in \R^N$
\FOR{$k\geq 0$} 
\FOR{ $i\in [n]$} 
\STATE
$
x_{k+1}^i=x_k^i- \frac{1}{v_i} \bB_i^{-1}
\nabla_i f(x_k)$ \\
\ENDFOR\\
\ENDFOR
\end{algorithmic}
\caption{Gradient Descent (GD) for solving~\eqref{eq-prob-smooth}}
\label{alg:gd}
\end{algorithm}
The complexity of the method is a corollary of Theorem~\ref{th-smooth_normal}.

\begin{coro}\label{coro-gd}
 For any optimal solution $x_*$ of~\eqref{eq-prob-smooth}, the output of Algorithm~\ref{alg:gd} for 
all $k\geq 1$ satisfies: 
\begin{align}\label{a-fxkfstarleq}
f(x_{k})-f(x_*)\leq  
\displaystyle\frac{\|x_0-x_*\|^2_{v}}{2k}.
\end{align}
In particular, for $\epsilon>0 $, if 
\begin{align}\label{a-coro-gd}
k\geq \frac{\|x_0-x_*\|^2_{ v}}{2\epsilon},
\end{align}
then $ f(x_k)-f(x_*)\leq \epsilon$.
\end{coro}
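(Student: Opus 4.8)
The plan is to obtain the corollary directly from Theorem~\ref{th-smooth_normal}, specialized to the parameter choices that turn Algorithm~\ref{alg:smooth} into Algorithm~\ref{alg:gd}, namely $\hat S = [n]$ with probability $1$ and $\theta_k = 1$ for all $k$. Since $\hat S = [n]$ surely, we have $p_i = \Prob(i\in\hat S) = 1$ for every $i\in[n]$, so $v\circ p^{-2} = v$ and $\min_i p_i = 1$. In particular the choice $\theta_0 = 1$ coincides with the choice $\theta_0 = \min_i p_i$ featured in the ``in particular'' part of Theorem~\ref{th-smooth_normal}, so \eqref{eq-unifi9878} applies with $y = x_*$. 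Because $1 - \min_i p_i = 0$, the term $(1-\min_i p_i)(f(x_0)-f(x_*))$ drops out and the right-hand side of \eqref{eq-unifi9878} collapses to $\tfrac12\|x_0-x_*\|_v^2 \big/ \big((k-1)+1\big) = \|x_0-x_*\|_v^2/(2k)$.

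It then remains to replace the quantity $\max\{\E[f(\hat x_k)],\,\min_{l\le k}\E[f(x_l)]\}$ on the left-hand side of \eqref{eq-unifi9878} by $f(x_k)$. Two observations do this. First, Algorithm~\ref{alg:gd} is deterministic, so all expectations may simply be dropped. Second, I claim the objective values are nonincreasing along the iterates, $f(x_{k+1})\le f(x_k)$ for all $k\ge 0$: applying the ESO inequality in its $\hat S=[n]$ form \eqref{a-esofull1} with $h$ equal to the gradient step $h^i = -v_i^{-1}\bB_i^{-1}\nabla_i f(x_k)$, a short computation using $\bB_i h^i = -v_i^{-1}\nabla_i f(x_k)$ and the definition \eqref{a-norm} of $\|\cdot\|_v$ shows $\ve{\nabla f(x_k)}{h} = -\|h\|_v^2$, whence \eqref{a-esofull1} gives $f(x_{k+1})\le f(x_k) - \tfrac12\|h\|_v^2 \le f(x_k)$. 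Consequently $\min_{l=1,\dots,k} f(x_l) = f(x_k)$, and \eqref{eq-unifi9878} yields exactly \eqref{a-fxkfstarleq}.

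Finally, the iteration-complexity claim is immediate from \eqref{a-fxkfstarleq}: if $k\ge \|x_0-x_*\|_v^2/(2\epsilon)$, then $\|x_0-x_*\|_v^2/(2k)\le\epsilon$, hence $f(x_k)-f(x_*)\le\epsilon$. I do not anticipate a real obstacle here; the only mildly delicate point is passing from the averaged/best-iterate guarantee of Theorem~\ref{th-smooth_normal} to a guarantee on the last iterate $x_k$, which is precisely why the monotonicity observation is included rather than left implicit.
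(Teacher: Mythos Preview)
Your proof is correct and follows essentially the same route as the paper: specialize Theorem~\ref{th-smooth_normal} to $\hat S=[n]$, $\theta_0=1$, then pass from the best-iterate bound to the last iterate via monotonicity of $\{f(x_k)\}$. The only cosmetic difference is that the paper derives the monotonicity $f(x_{k+1})\le f(x_k)$ by instantiating the key recursion~\eqref{eq-pr12} with $y=x_k$ (using $z_k=x_k$), whereas you recompute it directly from the ESO inequality~\eqref{a-esofull1}; both are one-line arguments and amount to the same thing.
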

\begin{proof}
By letting $y=x_k$ in~\eqref{eq-pr12}  we know that:
$$
f(x_{k+1}) \leq f(x_k)+\frac{\theta_k^2}{2} \|z_k-x_k\|^2 \overset{\eqref{a-xyz}}{=} f(x_k),\enspace \forall k\geq 1.
$$
Note that for this special case $x_k=z_k$. Therefore,
$$
f(x_k)-f(x_*)=\min_{l=1,\dots,k} f(x_{l})-f(x_*)\leq  
\displaystyle\frac{\|x_0-x_*\|^2_{v}}{2k},\enspace \forall k\geq 1,
$$
where the second inequality follows from applying Theorem~\ref{th-smooth_normal} to 
$\theta_0=1$ and $\hat S=[n]$.
\end{proof}
 Corollary~\ref{coro-gd} is a basic result and can be found in many  textbooks on convex optimization; see for example~\cite{NesterovIntro}.  

\subsection{Special case 2: accelerated gradient descent}\label{subsec-agd}

Let us still keep $\hat S=[n]$, but assume now the sequence  $\{\theta_k\}_{k\geq 0}$ is chosen according to~\eqref{a-thetak+1}. In this case,  Algorithm~\ref{alg:smooth} reduces to  accelerated gradient descent.

\begin{algorithm}[H]
\begin{algorithmic}[1]
\STATE \textbf{Parameters:} positive vector $v\in \R_{++}^n$  such that~\eqref{a-esofull1} holds
\STATE \textbf{Initialization:} {choose $x_0\in \R^N$, set $z_0=x_0$ and $\theta_0=1$}
\FOR{$k\geq 0$} 
\FOR{ $i\in [n]$} 
\STATE
$
z_{k+1}^i=z_k^i - \frac{1}{v_i \theta_k} \bB_i^{-1}
\nabla_i f((1-\theta_k)x_k+\theta_kz_k)
$ \\
\ENDFOR\\
\STATE $x_{k+1}=(1-\theta_k)x_k+\theta_k z_{k+1}$\\
\STATE $\theta_{k+1}=\frac{\sqrt{\theta_k^4+4\theta_k^2}-\theta_k^2}{2}$
\ENDFOR
\end{algorithmic}
\caption{Accelerated Gradient Descent (AGD) for solving~\eqref{eq-prob-smooth}}
\label{alg:accgd}
\end{algorithm}
Note that only two sequences $\{x_k,z_k\}_{k\geq 0}$ are explicitly used in Algorithm~\ref{alg:accgd}. This is achieved by replacing $y_k$  in Algorithm~\ref{alg:smooth} by $(1-\theta_k) x_k+\theta_k z_k$.   The following result  follows directly from Theorem~\ref{th-smooth_fast} by letting
$\theta_0=1$ and $p_i=1$ for all $i\in [n]$.
\begin{coro}\label{coro-accgd}
For any optimal solution $x_*$ of~\eqref{eq-prob-smooth}, the output of Algorithm~\ref{alg:accgd} for 
all $k\geq 1$ satisfies: 
\begin{align}\label{a-dfd}
f(x_k)-f(x_*)\leq \frac{2\|x_0-x_*\|^2_{v}}{(k+1)^2}.
\end{align}
In particular, for $\epsilon>0 $, if 
\begin{align}\label{a-coro-accgd}
k\geq \sqrt{\frac{2\|x_0-x_*\|^2_{ v}}{\epsilon}}-1,
\end{align}
then $f(x_k)-f(x_*)\leq \epsilon$.
\end{coro}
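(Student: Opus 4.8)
The plan is to derive Corollary~\ref{coro-accgd} as an immediate specialization of Theorem~\ref{th-smooth_fast}. First I would observe that taking the sampling $\hat S=[n]$ with probability $1$ gives $p_i=1$ for all $i\in[n]$, so that the weighted norm $\|\cdot\|_{v\circ p^{-2}}$ collapses to $\|\cdot\|_v$; moreover the ESO assumption with this sampling is exactly inequality~\eqref{a-esofull1}, which is precisely the hypothesis stated in Algorithm~\ref{alg:accgd}. Next I would note that with $\theta_0=1$ the constant $C$ in Theorem~\ref{th-smooth_fast} simplifies: the term $(1-\theta_0)(f(x_0)-f(y))$ vanishes, leaving $C=\tfrac12\|x_0-y\|_v^2\geq 0$, so the hypothesis $C\geq 0$ is automatically satisfied for every $y$, in particular for $y=x_*$.

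Then I would simply invoke the conclusion~\eqref{eq-unifidfg9878} of Theorem~\ref{th-smooth_fast} with $y=x_*$: since $p_i=1$, the bound reads $\E[f(x_k)]-f(x_*)\leq 2\|x_0-x_*\|_v^2/(k+1)^2$. Because Algorithm~\ref{alg:accgd} is deterministic (the sampling selects all blocks with certainty), the expectation is superfluous and we obtain~\eqref{a-dfd} directly. One should check, as a sanity step, that Algorithm~\ref{alg:accgd} really is Algorithm~\ref{alg:smooth} with these parameters: with $p_i=1$ line~8 of Algorithm~\ref{alg:smooth} becomes $z_{k+1}^i=z_k^i-\tfrac{1}{v_i\theta_k}\bB_i^{-1}\nabla_i f(y_k)$ and line~10 becomes $x_{k+1}=y_k+\theta_k(z_{k+1}-z_k)=(1-\theta_k)x_k+\theta_k z_{k+1}$ after substituting $y_k=(1-\theta_k)x_k+\theta_k z_k$; these match lines~5 and~7 of Algorithm~\ref{alg:accgd}.

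For the ``in particular'' part, I would solve the inequality $2\|x_0-x_*\|_v^2/(k+1)^2\leq\epsilon$ for $k$: this is equivalent to $(k+1)^2\geq 2\|x_0-x_*\|_v^2/\epsilon$, i.e. $k\geq\sqrt{2\|x_0-x_*\|_v^2/\epsilon}-1$, which is exactly~\eqref{a-coro-accgd}; combined with~\eqref{a-dfd} this yields $f(x_k)-f(x_*)\leq\epsilon$.

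There is essentially no obstacle here, since all the work was done in Theorem~\ref{th-smooth_fast}; the only mild point of care is verifying that the parameter choices $\hat S=[n]$, $\theta_0=1$ genuinely reduce Algorithm~\ref{alg:smooth} to Algorithm~\ref{alg:accgd} and that the expectation in~\eqref{eq-unifidfg9878} can be dropped by determinism. If one wished to be fully self-contained one could instead let $y=x_k$ in Lemma~\ref{lem:recursion} to re-derive monotonicity and then re-run the telescoping argument of the proof of Theorem~\ref{th-smooth_fast}, but this merely duplicates the existing proof and is unnecessary.
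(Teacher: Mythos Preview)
Your proposal is correct and takes essentially the same approach as the paper: the paper simply states that the corollary ``follows directly from Theorem~\ref{th-smooth_fast} by letting $\theta_0=1$ and $p_i=1$ for all $i\in [n]$.'' Your write-up is in fact more detailed than the paper's, explicitly checking the algorithmic reduction, the removal of the expectation by determinism, and the derivation of the iteration bound~\eqref{a-coro-accgd}, all of which the paper leaves implicit.
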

Algorithm~\ref{alg:accgd} is a special case of Algorithm 1 in~\cite{tseng2008accelerated}. The complexity bound~\eqref{a-dfd}  was also proved in~\cite[Corollary 1]{tseng2008accelerated}. See also   \cite{APPROX}.

\subsection{Special case 3: Parallel coordinate descent }
We now allow the method (Algorithm~\ref{alg:smooth}) to use an arbitrary sampling $\hat S$, but keep $\theta_k=\theta_0$ for all $k\geq 1$. This leads to Algorithm~\ref{alg:rcdm}.

\begin{algorithm}[H]
\begin{algorithmic}[1]
\STATE \textbf{Parameters:}   proper sampling $\hat S$ with probability vector $p=(p_1,\dots,p_n)$,  vector $v\in \R_{++}^ n$ for which $(f,\hat S)\sim ESO(v)$ 
\STATE \textbf{Initialization:} {choose $x_0\in \R^N$, set $z_0=x_0$ and $\theta_0=\min_i p_i$}
\FOR{$k\geq 0$} 
\STATE $y_k=(1-\theta_0)x_k+\theta_0z_k$ \\
\STATE Generate a random set of blocks $S_k\sim \hat S $\\
\STATE $z_{k+1}\leftarrow z_k$
\FOR{ $i\in S_k$} 
\STATE
$
z_{k+1}^i = z_k^i - \frac{p_i}{v_i \theta_0 } \bB_i^{-1}
\nabla_i f(y_k)$ \\
\ENDFOR\\
\STATE $x_{k+1}=y_k+{\theta_0}p^{-1} \cdot(z_{k+1}-z_k) $
\ENDFOR
\end{algorithmic}
\caption{Parallel Coordinate Descent (PCD) for solving~\eqref{eq-prob-smooth}}
\label{alg:rcdm}
\end{algorithm}
Note that in classical non-accelerated coordinate descent methods, only a single  sequence of iterates is needed. This is indeed the case for our method as well, in the  special case when the sampling $\hat S$ is uniform and $\theta_0=\E[|\hat S|]/n$, so that the three sequences are equal to each other.   We now state a direct corollary of Theorem~\ref{th-smooth_normal}.

\begin{coro}\label{coro-rcdm} 
For any optimal solution $x_*$ of~\eqref{eq-prob-smooth}, the output of Algorithm~\ref{alg:rcdm} for 
all $k\geq 1$ satisfies: 
\begin{equation*}
\begin{split}
&\max\left\{\E\left[f\left( \tfrac{x_k + \min_i p_i\sum_{l=1}^{k-1} x_l}{1+ (k-1)\min_i p_i} \right)\right]  , \min_{l=1,\dots, k} \E\left[f(x_{l})\right]\right\} -f(x_*) \\ &\leq  
\displaystyle\frac{(1-\min_i p_i)\left(f(x_0)-f(x_*)\right)+\frac{1}{2}\|x_0-x_*\|^2_{ v}}{(k-1)\min_i p_i+1} .\enspace
\end{split}
\end{equation*}
In particular, for $\epsilon>0 $, if 
\begin{align}\label{a-coro-rcdm}
k\geq \frac{(1-\min_i p_i)\left(f(x_0)-f(x_*)\right)+\frac{1}{2}\|x_0-x_*\|^2_{ v}}{\min_i p_i \epsilon}-\frac{1}{\min_i p_i}+1,
\end{align}
then
$$
\max\left\{\E\left[f\left( \tfrac{x_k + \min_i p_i\sum_{l=1}^{k-1} x_l}{1+ (k-1)\min_i p_i} \right)\right]  , \min_{l=1,\dots, k} \E\left[f(x_{l})\right]\right\} -f(x_*)\leq \epsilon.
$$
\end{coro}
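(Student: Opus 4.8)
The plan is to obtain Corollary~\ref{coro-rcdm} as an immediate specialization of Theorem~\ref{th-smooth_normal}. First I would observe that Algorithm~\ref{alg:rcdm} is nothing but Algorithm~\ref{alg:smooth} run with the constant stepsize sequence $\theta_k=\theta_0=\min_i p_i$ for every $k\geq 0$: lines 4, 8 and 10 of the two algorithms coincide once $\theta_k$ is frozen at $\min_i p_i$. Since $\hat S$ is assumed proper, each $p_i\in(0,1]$, hence $\theta_0=\min_i p_i\in(0,1]$, which is an admissible choice in Theorem~\ref{th-smooth_normal}; and $v$ satisfies $(f,\hat S)\sim ESO(v)$ by hypothesis. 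Therefore the conclusion of Theorem~\ref{th-smooth_normal}, and in particular its specialization~\eqref{eq-unifi9878} with the substitution $y=x_*$, applies verbatim. Noting that $\hat x_k$ from Theorem~\ref{th-smooth_normal} with $\theta_0=\min_i p_i$ is exactly the convex combination $\tfrac{x_k+\min_i p_i\sum_{l=1}^{k-1}x_l}{1+(k-1)\min_i p_i}$ appearing inside $f$ in the corollary, this yields the first displayed inequality of Corollary~\ref{coro-rcdm} directly.

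It then remains to derive the iteration-complexity statement. Here I would use that, for fixed data, the right-hand side
\[
\frac{(1-\min_i p_i)\bigl(f(x_0)-f(x_*)\bigr)+\tfrac12\|x_0-x_*\|^2_{v}}{(k-1)\min_i p_i+1}
\]
is non-increasing in $k$ (the numerator is a nonnegative constant, and the denominator is increasing in $k$ because $\min_i p_i>0$), so it suffices to identify the smallest $k$ for which this quantity is at most $\epsilon$. Requiring it to be $\le\epsilon$ is, after multiplying through by the positive denominator, equivalent to $(k-1)\min_i p_i+1\ge\bigl[(1-\min_i p_i)(f(x_0)-f(x_*))+\tfrac12\|x_0-x_*\|^2_{v}\bigr]/\epsilon$, and solving this linear inequality for $k$ gives precisely the bound~\eqref{a-coro-rcdm}. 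Combining the two displays completes the argument.

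I do not expect any genuine obstacle here: this is a routine corollary, and the only points that warrant a line of care are (i) checking that $\theta_0=\min_i p_i$ is a legal stepsize, i.e.\ lies in $(0,1]$, which follows from properness of $\hat S$, and (ii) checking that the rearrangement of the bound into a lower bound on $k$ preserves the inequality direction, which it does because $\min_i p_i$ and $\epsilon$ are strictly positive. One could also note in passing that, because $x_*$ is optimal, $f(x_0)-f(x_*)\ge 0$, so the constant $C$ in Theorem~\ref{th-smooth_normal} is nonnegative in this instance, though the nonaccelerated theorem does not even require that.
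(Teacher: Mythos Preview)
Your proposal is correct and matches the paper's approach: the paper presents Corollary~\ref{coro-rcdm} explicitly as a direct corollary of Theorem~\ref{th-smooth_normal}, and your argument---specializing Algorithm~\ref{alg:smooth} to the constant sequence $\theta_k=\min_i p_i$, invoking~\eqref{eq-unifi9878} with $y=x_*$, and then rearranging the resulting bound to extract the iteration count---is exactly this specialization carried out in full.
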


In the special case when $\hat{S}$ is the serial uniform sampling,  the three sequences $\{x_k,y_k,z_k\}_{k\geq 0}$ coincide, and one can show that the following bound holds:
$$
\E[f(x_{k})]-f(x_*)\leq  
\displaystyle\frac{n}{k-1+n}\left[
\left(1-\frac{1}{n}\right)\left(f(x_0)-f(x_*)\right)+\frac{1}{2}\|x_0-x_*\|^2_{v}\right].
$$
Randomized coordinate descent with serial and importance sampling (in a form different from Algorithm~\ref{alg:rcdm}) was considered by Nesterov~\cite{Nesterov:2010RCDM}. In the special case of serial uniform sampling when Algorithm~\ref{alg:rcdm} is the same as in~\cite{Nesterov:2010RCDM}, the following convergence rate was proved in~\cite{Nesterov:2010RCDM}:
$$
\E[f(x_k)]-f(x_*) \leq\frac{2n}{k+4}
 \cR^2 (x_0)
$$
where $\cR(x_0)$ is a weighted level-set distance to the set of optimal points $X_*$:
$$
\cR(x_0)\eqdef \max_x \left\{ \max_{x_* \in X_*} \|x_0-x_*\|_v^2 \;:\; f(x) \leq f(x_0)\right\}.
$$

Our result does not require the level sets of $f$ to be bounded.

\subsection{Special case 4: Accelerated parallel coordinate descent }

To obtain the accelerated coordinate descent method, as a special case of Algorithm~\ref{alg:smooth}, we only need to let the sequence $\{\theta_k\}_{k\geq 0}$ satisfy~\eqref{a-thetak+1}.

\begin{algorithm}[H]
\begin{algorithmic}[1]
\STATE \textbf{Parameters:}  proper  sampling $\hat S$ with probability vector $p=(p_1,\dots,p_n)$,  vector $v \in \R^n_{++}$ for which $(f,\hat{S})\sim ESO(v)$, $\theta_0=1$
\STATE \textbf{Initialization:} {choose $x_0\in  \R^N$ and set  $z_0=x_0$}
\FOR{$k\geq 0$} 
\STATE $y_k=(1-\theta_k)x_k+\theta_kz_k$ \\
\STATE Generate a random set of blocks $S_k\sim \hat S $\\
\STATE $z_{k+1}\leftarrow z_k$
\FOR{ $i\in S_k$} 
\STATE
$
z_{k+1}^i = z_k^i - \frac{p_i}{v_i \theta_k} \bB_i^{-1}
\nabla_i f(y_k)$ \\
\ENDFOR\\
\STATE $x_{k+1}=y_k+\theta_k p^{-1} \cdot (z_{k+1}-z_k) $\\
\STATE $\theta_{k+1}=\frac{\sqrt{\theta_k^4+4\theta_k^2}-\theta_k^2}{2}$
\ENDFOR
\end{algorithmic}
\caption{Accelerated parallel coordinate descent (APCD) for solving~\eqref{eq-prob-smooth}}
\label{alg:accsmooth}
\end{algorithm}
The convergence result then follows directly as a corollary of Theorem~\ref{th-smooth_fast}.
\begin{coro}\label{coro-accsmooth} 
For any optimal solution $x_*$ of~\eqref{eq-prob-smooth}, the output of Algorithm~\ref{alg:accsmooth} for 
all $k\geq 1$ satisfies: 
\begin{equation}\label{eq-coro-as}
 \E\left[f(x_{k})\right] -f(x_*)\leq  
\displaystyle\frac{2\|x_0-x_*\|^2_{v\circ p^{-2}}}{(k+1)^2}.
\end{equation}
In particular, for $\epsilon>0 $, if 
\begin{align}\label{a-coro-accs}
k\geq \sqrt{\frac{2\|x_0-x_*\|^2_{v\circ p^{-2}}}{\epsilon}}-1,
\end{align}
then $\E\left[f(x_k)\right]-f(x_*) \leq \epsilon$.
\end{coro}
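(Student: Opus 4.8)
The plan is to obtain Corollary~\ref{coro-accsmooth} as a direct specialization of Theorem~\ref{th-smooth_fast}. The first step is to observe that Algorithm~\ref{alg:accsmooth} is precisely Algorithm~\ref{alg:smooth} run with the stepsize sequence $\{\theta_k\}_{k\geq 0}$ defined by~\eqref{a-thetak+1} and initialized at $\theta_0 = 1$: a line-by-line comparison of the two pseudocodes (the convex-combination update defining $y_k$, the coordinate updates of $z_{k+1}$ on $S_k$, and the update of $x_{k+1}$) shows that they generate the same sequences $\{x_k, y_k, z_k\}_{k\geq 0}$. Consequently, the conclusion of Theorem~\ref{th-smooth_fast} transfers verbatim to the iterates of Algorithm~\ref{alg:accsmooth}.

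Second, I would apply Theorem~\ref{th-smooth_fast} with $y = x_*$, an optimal solution of~\eqref{eq-prob-smooth}. The theorem is stated for any $y$ with $C \geq 0$, so the only thing to verify is that this holds here; but since $\theta_0 = 1$, the coefficient $(1-\theta_0)$ multiplying $f(x_0)-f(x_*)$ vanishes and
\[C = \tfrac{1}{2}\,\|x_0 - x_*\|^2_{v\circ p^{-2}} \;\geq\; 0,\]
so the hypothesis is automatic. Plugging $\theta_0 = 1$ and this value of $C$ into the bound~\eqref{eq-unifijhgsu8756756s} gives
\[\E[f(x_k)] - f(x_*) \;\leq\; \frac{4C}{((k-1)\theta_0+2)^2} \;=\; \frac{2\,\|x_0-x_*\|^2_{v\circ p^{-2}}}{(k+1)^2},\]
which is exactly~\eqref{eq-coro-as}; this is the specialization already recorded as~\eqref{eq-unifidfg9878}.

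Finally, for the iteration-complexity statement I would simply invert the rate: the right-hand side of~\eqref{eq-coro-as} is at most $\epsilon$ precisely when $(k+1)^2 \geq 2\|x_0-x_*\|^2_{v\circ p^{-2}}/\epsilon$, i.e.\ when $k \geq \sqrt{2\|x_0-x_*\|^2_{v\circ p^{-2}}/\epsilon} - 1$, which is~\eqref{a-coro-accs}; monotonicity of the bound in $k$ handles all larger $k$. I do not expect any genuine obstacle: all the analytic work is contained in Theorem~\ref{th-smooth_fast} (and, underneath it, in Lemma~\ref{lem:recursion} together with the properties~\eqref{a-thetak20}--\eqref{a-thetak30} of the sequence $\{\theta_k\}$). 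The only points requiring a word of care are the identification of Algorithm~\ref{alg:accsmooth} with the $\theta_0=1$ instance of Algorithm~\ref{alg:smooth} and the trivial observation that $C \geq 0$ when $\theta_0 = 1$.
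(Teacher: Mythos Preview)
Your proposal is correct and follows exactly the paper's approach: the paper states only that ``The convergence result then follows directly as a corollary of Theorem~\ref{th-smooth_fast},'' and your write-up simply fills in the obvious details of that specialization ($\theta_0=1$, $y=x_*$, so $C=\tfrac12\|x_0-x_*\|^2_{v\circ p^{-2}}\ge0$), together with the trivial inversion of the rate to get~\eqref{a-coro-accs}.
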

When specialized to the serial uniform sampling (sampling $\hat{S}$ for which $\Prob(\hat{S}=\{i\})=1/n$ for $i\in [n]$), the bound~\ref{eq-coro-as} simplifies to:
\begin{equation}\label{eq-coro-as22}
 \E\left[f(x_{k})\right] -f(x_*)\leq  
\displaystyle\frac{2n^2\|x_0-x_*\|^2_{v}}{(k+1)^2}.
\end{equation}
An accelerated coordinate descent method for unconstrained minimization in the special case of serial uniform sampling was first proposed and analyzed by Nesterov~\cite{Nesterov:2010RCDM}, where the following bound was proved:
 \begin{equation}\label{eq-coro-as-nesterov}
 \E\left[f(x_{k})\right] -f(x_*)\leq  
\displaystyle \left(\frac{n}{k+1}\right)^2\left[2\|x_0-x_*\|^2_{v}+\frac{1}{n^2}\left(f(x_0)-f(x_*)\right)\right].
\end{equation}
Comparing~\eqref{eq-coro-as22} and~\eqref{eq-coro-as-nesterov}, it is clear that we obtain a  better  bound. An accelerated coordinate descent method (APPROX) utilizing an {\em arbitrary uniform sampling} was studied by Fercoq and Richt\'{a}rik \cite{APPROX}. Algorithm~\ref{alg:accsmooth}, when restricted to a uniform sampling, is similar to this method. The main difference is  in the value of $\theta_0$. Indeed, in~\cite{APPROX}, $\theta_0$ is chosen to be $\E[|\hat S|]/n$, while our analysis allows $\theta_0$ to be chosen as large as $1$. This lead to larger stepsizes and a simplified and improved convergence bound.

Each serial sampling $\hat S$ is uniquely characterized by the vector of probabilities $p=(p_1,\dots,p_n)$ where $p_i$ is defined by~\eqref{eq:p_i}. Suppose that the function $f$ has block-Lipschitz gradient with constants $L_1,\dots,L_n$:
\begin{align}\label{a-fxbUi}
f(x+\bU_i h^i)\leq f(x)+\<\nabla_i f(x), h^i>+
\frac{L_i}{2} \|h^i\|_i^2,\qquad  \forall i\in [n],\; h^i\in \R^{N_i}, \;x\in \R^N.
\end{align}
If $\hat S$ is a serial sampling, then
\begin{align*}
\E\left[f(x+ h_{[\hat S]})\right]&=\sum_{i=1}^n p_i f(x+\bU_i h^i) \overset{\eqref{a-fxbUi}}{\leq} 
f(x)+\sum_{i=1}^n p_i \ve{\nabla_i f(x)}{h^i}+\sum_{i=1}^n \frac{p_iL_i}{2} \|h^i\|_i^2 
\\ &= f(x)+\ve{\nabla f(x)}{h}_p +\frac{1}{2}\|h\|_{L\circ p}^2,
\end{align*}
which means that $(f,\hat S)\sim ESO (L)$, where $L=(L_1,\dots,L_n)\in \R_{++}^n$. We can now find a sampling $\hat{S}$ for which the complexity bound \eqref{a-coro-accs} is minimized. This leads to the choice:
\begin{align}\label{a-optp}
p_i^*\eqdef \frac{(L_i\|x_*^i-x_0^i\|_i^2)^{\frac{1}{3}}}{\displaystyle\sum_{j=1}^n (L_j\|x_*^j-x_0^j\|_j^2)^{\frac{1}{3}}},\qquad i=1,\dots,n.
\end{align}

The optimal serial sampling given by ~\eqref{a-optp} is not very useful without (at least some) knowledge of $x_*$, which is not known. However, note that the formula \eqref{a-optp} confirms the intuition that blocks with larger $L_i$ and larger distance to the optimal block $\|x_*^i-x_0^i\|_i$ should be picked (and hence updated) more often.


\section{Efficient implementation}\label{sec-eie}
As mentioned in Section~\ref{sec-desalgo}, Algorithm~\ref{alg:general} requires full-dimensional operations at each iteration unless $\theta_k p_i=1$ for all $i\in [n]$ and $k\in \bN$. In this section we provide an equivalent form of Algorithm~\ref{alg:general} which is suitable for efficient implementation under some additional assumptions on the computation of the gradient $\nabla f$.

\subsection{Equivalent form}
Focusing on the iterates $x_k,y_k,z_k$ in Algorithm~\ref{alg:general} only, the general algorithm can schematically be written as follows:
\begin{eqnarray}
y_k &\leftarrow & (1-\theta_k)x_k + \theta_k z_k\label{eq:97987}\\
z_{k+1} &\leftarrow & \left\{\begin{array}{ll} \argmin_{z\in \R^{N_i}} \big\{\< \nabla_i f(y_k), z>+\frac{\theta_k  v_i }{2p_i} \|z-z_k^i\|_i^2+\psi^i(z) \big\} & i\in S_k \\ z_k^i & i\notin S_k\end{array}\right. \label{eq:9ns8ijd}\\
x_{k+1} &\leftarrow & y_k + \theta_k p^{-1}\cdot(z_{k+1}-z_k)  \label{eq:d9hdujd}
\end{eqnarray}

Consider the {change of variables} from $\{x_k,y_k,z_k\}$ to $\{z_k,g_k\}$   where
\begin{equation}\label{eq:suihidd} 
g_k = \alpha_k^{-1}(y_k-z_k)
\end{equation}
and $\{\alpha^k\}_{k\geq 0}$ is a sequence defined by:
\begin{align}\label{a-alphak}
\alpha_0=1,\enspace
\alpha_{k}=(1-\theta_k)\alpha_{k-1},\enspace \forall k\geq 1.
\end{align}
Note that, in all the special cases presented in Section~\ref{sec-specialcasesm} and~\ref{sec-proximalmin}, either $
\theta_k < 1
$ for all $k \geq 1$ or $\theta_k=1$ for 
all $k\geq 1$. The latter case does not require the full-dimensional operation $y_k=(1-\theta_k)x_k+\theta_k z_k$ because 
the three sequences equal to each other.
We thus only address the case when $
\theta_k < 1
$ for all $k \geq 1$ which implies that $\alpha_k \neq 0 $ for all $k\geq 1$.

Then from $\{z_k,g_k\}$ and $\{\alpha_k\}$ we can recover $\{x_k,y_k\}$ as follows:
\begin{equation}\label{eq:s98ddxmk}
y_k \overset{\eqref{eq:suihidd}}{=} z_k + \alpha_k g_k, \qquad
x_{k+1} \overset{\eqref{eq:d9hdujd}+\eqref{eq:suihidd}}{=} (z_k+ \alpha_k g_k) + \theta_k p^{-1} \cdot (z_{k+1}-z_k) . \end{equation}
Moreover,  $g_{k+1}$ can be computed recursively  as follows:
\begin{eqnarray*}g_{k+1} &\overset{\eqref{eq:suihidd}}{=}& \alpha_{k+1}^{-1}( y_{k+1}-z_{k+1}) \overset{\eqref{eq:97987}}{=} \alpha_{k+1}^{-1}(1-\theta_{k+1})(x_{k+1}-z_{k+1})\\&
 \overset{\eqref{a-alphak}}{=} &\alpha_{k}^{-1}(x_{k+1}-z_{k+1})
 \overset{\eqref{eq:s98ddxmk}}{=}  g_k - \alpha_k^{-1}(e-\theta_k p^{-1})\cdot (z_{k+1}-z_k),\end{eqnarray*}
where $e\in \R^n$ is the vector of all ones.
Therefore the updating scheme \eqref{eq:97987}--\eqref{eq:d9hdujd} can thus be written in the form:
\begin{eqnarray}
z_{k+1} &\leftarrow & \left\{\begin{array}{ll} \argmin_{z\in \R^{N_i}} \big\{\< \nabla_i f(\alpha_k g_k+z_k), z>+\frac{\theta_k  v_i }{2p_i} \|z-z_k^i\|_i^2+\psi^i(z) \big\} & i\in S_k \\ z_k^i & i\notin S_k\end{array}\right.\label{eq:sj8duhdus}\\
g_{k+1}&\leftarrow & g_k -  \alpha_k^{-1}(e-\theta_k  p^{-1})\cdot(z_{k+1}-z_k)
\\ \alpha_{k+1} &\leftarrow & (1-\theta_{k+1})\alpha_k
\label{eq:098456hs9}
\end{eqnarray}
Hence Algorithm~\ref{alg:general} can be written in the following equivalent form.
\begin{algorithm}[H]
\begin{algorithmic}[1]
\STATE \textbf{Parameters:}  proper  sampling $\hat S$ with probability vector $p=(p_1,\dots,p_n)$,   $v \in \R^n_{++}$, sequence $\{\theta_k\}_{k\geq 0}$
\STATE \textbf{Initialization:} {choose $x_0\in \dom \psi$, set  $z_0=x_0$, $g_0=0$ and $\alpha_0=1$}
\FOR{ $k \geq 0$}
\STATE Generate a random set of blocks $S_k \sim \hat{S}$
\STATE  $z_{k+1} \leftarrow z_k$ , $g_{k+1}\leftarrow g_k$
\FOR{ $i \in S_k$}
\STATE  $t_{k}^{i} = \arg\min_{t \in \R^{N_i}} \left\{ \langle \nabla_i f (\alpha_k g_k + z_k), t \rangle + \frac{\theta_k   v_i}{2 p_i}  \|t\|_{i}^2 + \psi^i(z_{k}^{i}+t)\right\}$ 
\STATE $z_{k+1}^{i} \leftarrow z_{k}^{i} + t_{k}^{i}$
\STATE $g_{k+1}^{i} \leftarrow g_{k}^{i} - \alpha_k^{-1}(1-\theta_k p_i^{-1})t_{k}^{i}$
\STATE $\alpha_{k+1}=(1-\theta_{k+1})\alpha_k$
\ENDFOR 
\ENDFOR 
\STATE OUTPUT: $x_{k+1} =  z_k+ \alpha_k g_k + \theta_k  p^{-1}\cdot(z_{k+1}-z_k)$
\end{algorithmic}
\caption{Efficient equivalent of Algorithm~\ref{alg:general}}\label{alg:efficientequivalent}
\end{algorithm}

\subsection{Cost of a single iteration}

In order to perform Step 7,
  it is important that we  have access to $\nabla_i f(y_k)=\nabla_i f(\alpha_k g_k+z_k)$ 
  \textit{without} actually computing $y_k$. In~\cite{APPROX}, the authors show that this is possible for problems~\eqref{eq-probse} where
   $f$ can be written as:
  \begin{align}\label{eq-efff}
f(x)=\sum_{j=1}^m  \phi_j(e_j^\top \bA x),\enspace \forall x\in \R^N, 
  \end{align}
  for some matrix $\bA \in \R^{m\times N}$. 
Let us write 
$$
u_k= \bA g_k,\enspace w_k=\bA z_k,\enspace k\geq 1.
$$  
For $i\in[n]$ and $j\in [m]$ denote by $\bA_{ji}$
the $i$th block of the $j$th row vector of the matrix $\bA$, i.e.,
$\bA_{ji}=U_i^\top \bA^\top e_j$. For each $i\in [n]$, denote by $I_i$ the number of rows
 containing a non-zero $i$th block, i.e.,
$$
I_i\eqdef \{j\in [m]: \bA_{ji}\neq 0\}.
$$
 Then for $f$ taking the form of~\eqref{eq-efff} we have
  $$
\nabla_i f(y_k)=\nabla_i f(\alpha_k g_k+z_k)
=\sum_{j=1}^m \bA_{ji} \phi_j'(\alpha_k u_k^j+  w_k^j)  =\sum_{j\in I_i }\bA_{ji} \phi_j'(\alpha_k u_k^j+  w_k^j),
  $$
  where by abuse of notation $u_k^j$ and $w_k^j$ denote respectively the $j$th element of the vectors $u_k$ and $w_k$. With the knowledge of the vectors $u_k$ and $w_k$, computing $\nabla_i f(y_k)$ requires 
  $O(|I_iN_i|)$ operations. Now in order to keep record of the vectors $u_k$ and $w_k$, we use the following equality:
  \begin{align}\label{a-wk+1}
w_{k+1}=\bA z_{k+1}=\bA z_k + \bA (z_{k+1}-z_k) =w_k+ \sum_{i \in S_k}
A\bU_i t_k^i ,
  \end{align}
  and
  \begin{align}\label{a-uk+1}
u_{k+1}=\bA g_{k+1}=\bA g_k +\bA(g_{k+1}-g_k)
=  u_k+\sum_{i \in S_k}\alpha_k^{-1}(1-\theta_kp_i^{-1})
\bA\bU_i t_k^i .
  \end{align}
  Since $\bA\bU_i$ is a matrix with $|I_i N_i|$ nonzero elements,  the updating  schemes~\eqref{a-wk+1} and~\eqref{a-uk+1} require then
  $$
\sum_{i\in S_k} O(I_iN_i)   
  $$
  operations. Note that this is also the total complexity of gradient computation $\nabla_i f(y_k)$ at $k$th iteration. Denote by $\nnz(\bA)$ the total number of nonzero blocks of the matrix $A$, i.e.,
$$\nnz(\bA)\eqdef \sum_{i=1}^n I_i.$$
Let us consider the special case when $|\hat S|=\tau$ and $N_i=N/n$ for all $i\in [n]$. In this case, the expected one iteration computational complexity is:
$$
\E\left[\sum_{i\in S_k} O\left(I_i \frac{N}{n}\right)\right]=O\left( \sum_{i=1}^n \frac{\tau N}{n^2} I_i\right)=O\left(\frac{\tau N\nnz(\bA)}{n^2}\right) .
$$
To make it more direct to understand, let us consider the case when each block contains only one coordinate, i.e., $N=n$. Then the latter expected one iteration complexity becomes
$$
\E\left[\sum_{i\in S_k} O\left(I_i \frac{N}{n}\right)\right]=O\left(\frac{\tau \nnz(\bA)}{n}\right).
$$
Hence, in this case the one iteration complexity in expectation of Algorithm~\ref{alg:efficientequivalent} is of order $O(\tau \bar \omega)$ where $\bar \omega$ is the average number of nonzero elements of the columns of $\bA$. Not considering the time spent on synchronization and handling read/write conflicts, the average processing time would be $O(\bar \omega)$  if we use a parallel implementation with $\tau$ processors.


\section{Proximal  minimization}\label{sec-proximalmin}

In this section  we present and prove complexity results for ALPHA (Algorithm~\ref{alg:general}) as applied to the general problem  \eqref{eq-probse} involving the proximal term. We leave the discussion concerning special cases to the appendix.

\subsection{Complexity results}

In the presence of the proximal term $\psi$, the same complexity bounds as those given in Theorems~\ref{th-smooth_normal} and~\ref{th-smooth_fast} hold for the output of Algorithm~\ref{alg:general}, with the exception that $\theta_0$ is only allowed to be chosen between $(0,\min_i p_i]$. We now state the formal complexity theorems, first in the nonaccelerated and then in the accelerated case.

\begin{theo}[ALPHA -- proximal \& nonaccelerated]\label{th-proximal_normal} 
Let $\hat{S}$ be arbitrary proper sampling and $v\in \R^n_{++}$ be such that $(f,\hat S)\sim ESO(v)$. 
Choose $\theta_k=\theta_0\in (0,\min_i p_i]$ for all $k\geq 0$. Then for any $y\in \R^N$, the iterates $\{x_k\}_{k\geq 1}$ of Algorithm~\ref{alg:general} satisfy: 
\begin{equation}\label{eq-unif7s}
\max\left\{ \E[F(\hat{x}_{k})] , \min_{l=1,\dots,k} \E[F(x_{l})]\right\} -F(y) \leq  
\displaystyle\frac{C}{(k-1)\theta_0+1}  \enspace , \forall k\geq 1
\end{equation}
where 
\[\hat{x}_k = \frac{x_k + \theta_0\sum_{l=1}^{k-1} x_l}{1+ (k-1)\theta_0}\]
and 
$$
C = (1-\theta_0)\left(F(x_0)-F(y)\right)+\frac{\theta_0^2}{2}\|x_0-y\|^2_{v\circ p^{-2}}.
$$
\end{theo}

\begin{theo}[ALPHA -- proximal \& accelerated] \label{th-proximal_fast}  
Let $\hat{S}$ be arbitrary proper sampling and $v\in \R^n_{++}$ be such that $(f,\hat S)\sim ESO(v)$. Choose $\theta_0\in (0,\min_i p_i]$ and define the sequence $\{\theta_k\}_{k\geq 0}$ by 
\begin{align*}\theta_{k+1}=\frac{\sqrt{\theta_k^4+4\theta_k^2}-\theta_k^2}{2}.
\end{align*} Then for any $y\in \R^N$ such that $C\geq 0$, the iterates $\{x_k\}_{k\geq 1}$ of Algorithm~\ref{alg:general} satisfy: 
\begin{equation}\label{eq-unifisu87556s}
\E[F(x_{k})]-F(y) \leq  
\displaystyle\frac{4C}{((k-1)\theta_0+2)^2} , \enspace
\end{equation}
where 
$$
C = (1-\theta_0)\left(F(x_0)-F(y)\right)+\frac{\theta_0^2}{2}\|x_0-y\|^2_{v\circ p^{-2}}.
$$
\end{theo}

In the remainder of the section we will provide the complexity analysis.

Our approach is  similar to that presented in \cite{APPROX}, but with many modifications required because we allow for an  arbitrary sampling.  We begin with some technical lemmas.

\subsection{Technical lemmas} 
 Lemma~\ref{l-xkzkc} shows that each individual block  $x^i_k$ of the variable $x_k$ is a convex 
combination of all the history blocks  $z_0^i, \dots,z_k^i$. 
Note that  due to the importance sampling, the combination coefficients $\gamma_{k,0}^{i},\dots, \gamma_{k,k}^{i}$ is now block-dependent, 
in contrast with the block-independent coefficients proved in~\cite{APPROX}.

\begin{lemma}\label{l-xkzkc}

 Let $\{x_k,z_k\}_{k\geq 0}$ be the iterates of Algorithm~\ref{alg:general}. Then for all $k\in \bN$ and $i\in [n]$
we have
\begin{align}\label{a-x_kisum}
x_k^{i}=\sum_{l=0}^k \gamma_{k,l}^{i} z_l^i,\enspace
\end{align}
where for each $i$, the coefficients $\{\gamma_{k,l}^{i}\}_{l=0,\dots,k}$  are defined recursively  by setting $\gamma_{0,0}^{i}=1$,
$\gamma_{1,0}^{i}=1-\theta_0p_i^{-1}$, $\gamma_{1,1}^{i}=\theta_0p_i^{-1}$
and for $k\geq 1$,
\begin{equation}\label{eq-gamk}
\gamma_{k+1,l}^{i}=\left\{
\begin{array}{ll}
(1-\theta_k)\gamma_{k,l}^{i} & l=0,\dots,k-1\\
 (1-\theta_k)\gamma_{k,k}^{i}+\theta_k-\theta_kp_i^{-1} & l=k\\
\theta_kp_i^{-1} & l=k+1
\end{array}
\right.
\end{equation}
so that the following identity holds,
\begin{align}\label{a-gak1ga}
\gamma_{k+1,k}^{i}+\gamma_{k+1,k+1}^{i}=(1-\theta_k)\gamma_{k,k}^{i}+\theta_k, \enspace \forall k\in \bN, \enspace i\in [n].
\end{align}
Moreover, if   $\theta_0\in (0,\min_i p_i]$ and $\{\theta_k\}_{k\geq 0}$ is a decreasing positive sequence, then for all $k\in \bN$ and  $i\in[n]$, the coefficients $\{\gamma_{k,l}^{i}\}_{l=0,\dots,k}$ are
all positive and sum to 1.
\end{lemma}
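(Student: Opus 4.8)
The plan is to verify the representation \eqref{a-x_kisum} by induction on $k$ using the update rules of Algorithm~\ref{alg:general}, then separately establish positivity and the sum-to-one property. For the base case, $x_0 = z_0$ gives $\gamma_{0,0}^i = 1$; the update $x_1 = y_0 + \theta_0 p^{-1}\cdot(z_1-z_0)$ together with $y_0 = (1-\theta_0)x_0 + \theta_0 z_0 = z_0$ (since $x_0=z_0$) yields $x_1^i = z_0^i + \theta_0 p_i^{-1}(z_1^i - z_0^i) = (1-\theta_0 p_i^{-1})z_0^i + \theta_0 p_i^{-1} z_1^i$, matching $\gamma_{1,0}^i$ and $\gamma_{1,1}^i$. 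For the inductive step, I would combine Step 4 and Step 10 of Algorithm~\ref{alg:general}: writing $x_{k+1} = y_k + \theta_k p^{-1}\cdot(z_{k+1}-z_k)$ and $y_k = (1-\theta_k)x_k + \theta_k z_k$, substitute the inductive hypothesis $x_k^i = \sum_{l=0}^k \gamma_{k,l}^i z_l^i$. This gives
\[
x_{k+1}^i = (1-\theta_k)\sum_{l=0}^k \gamma_{k,l}^i z_l^i + \theta_k z_k^i + \theta_k p_i^{-1}(z_{k+1}^i - z_k^i),
\]
and collecting coefficients of each $z_l^i$ reproduces exactly \eqref{eq-gamk}: the $z_l^i$ for $l\le k-1$ pick up the factor $(1-\theta_k)$, the $z_k^i$ coefficient becomes $(1-\theta_k)\gamma_{k,k}^i + \theta_k - \theta_k p_i^{-1}$, and the new term $z_{k+1}^i$ enters with $\theta_k p_i^{-1}$. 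The identity \eqref{a-gak1ga} is then immediate by adding the last two lines of \eqref{eq-gamk}. One small point to be careful about is that $z_{k+1}^i \ne z_k^i$ in general only when $i\in S_k$; but when $i\notin S_k$ the new coefficient $\theta_k p_i^{-1}$ multiplies $z_{k+1}^i = z_k^i$, so the formula remains a valid (if non-reduced) convex representation, and the recursion is stated to hold identically for all $i$, so this causes no issue.

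For the sum-to-one claim, I would show $\sum_{l=0}^k \gamma_{k,l}^i = 1$ for all $k$, again by induction: it holds at $k=0,1$ by inspection, and summing \eqref{eq-gamk} over $l=0,\dots,k+1$ gives $(1-\theta_k)\sum_{l=0}^k \gamma_{k,l}^i + \theta_k = (1-\theta_k)\cdot 1 + \theta_k = 1$ using the hypothesis. (Equivalently, this follows from \eqref{a-gak1ga} plus the fact that the $l\le k-1$ block just scales by $1-\theta_k$.)

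The main obstacle is the positivity claim, which is where the hypotheses $\theta_0 \le \min_i p_i$ and $\{\theta_k\}$ decreasing are actually used. From \eqref{eq-gamk} the only coefficient whose sign is not obviously nonnegative is $\gamma_{k+1,k}^i = (1-\theta_k)\gamma_{k,k}^i + \theta_k(1 - p_i^{-1})$, which is a difference of a positive and a negative quantity since $p_i^{-1}\ge 1$. So the crux is a lower bound on $\gamma_{k,k}^i$, the coefficient of the ``most recent'' iterate. I would prove by induction the sharper statement $\gamma_{k,k}^i \ge \theta_k p_i^{-1}$ for all $k\ge 1$ (note $\gamma_{1,1}^i = \theta_0 p_i^{-1}$ gives equality at $k=1$). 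Assuming it at level $k$, we get
\[
\gamma_{k+1,k}^i = (1-\theta_k)\gamma_{k,k}^i + \theta_k - \theta_k p_i^{-1} \ge (1-\theta_k)\theta_k p_i^{-1} + \theta_k - \theta_k p_i^{-1} = \theta_k\bigl(1 - \theta_k p_i^{-1}\bigr) \ge 0,
\]
where the last step uses $\theta_k \le \theta_0 \le p_i$, i.e. $\theta_k p_i^{-1}\le 1$; this also hands us positivity of $\gamma_{k+1,k}^i$ as long as $\theta_k < p_i$ (and the boundary case is handled by noting the coefficient is then $0$, which the statement permits, or by checking the problem intends strict positivity only when $\theta_0 < \min_i p_i$). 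Then $\gamma_{k+1,k+1}^i = \theta_k p_i^{-1} > 0$ trivially, and to close the induction I still need $\gamma_{k+1,k+1}^i \ge \theta_{k+1}p_i^{-1}$, which is exactly $\theta_k \ge \theta_{k+1}$ — precisely the decreasing hypothesis. The coefficients $\gamma_{k+1,l}^i$ for $l\le k-1$ are positive because they equal $(1-\theta_k)$ times positive quantities and $\theta_k < 1$. This completes all three parts; the whole argument is elementary once the auxiliary invariant $\gamma_{k,k}^i \ge \theta_k p_i^{-1}$ is identified, which I expect to be the one non-routine idea.
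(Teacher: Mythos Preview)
Your proposal is correct and follows essentially the same route as the paper: induction on $k$ for the representation \eqref{a-x_kisum}, then the sum-to-one property, then positivity using the decreasing hypothesis on $\{\theta_k\}$. One simplification worth noting: your ``auxiliary invariant'' $\gamma_{k,k}^i \ge \theta_k p_i^{-1}$ is in fact the \emph{equality} $\gamma_{k,k}^i = \theta_{k-1}p_i^{-1}$ for every $k\ge 1$ (read it straight off the last line of \eqref{eq-gamk}), so no separate induction is needed there --- the paper uses this to rewrite $\gamma_{k+1,k}^i = \theta_k(1-\gamma_{k,k}^i) + (\theta_{k-1}-\theta_k)p_i^{-1} \ge \theta_k(1-\gamma_{k,k}^i) \ge 0$, invoking the \emph{upper} bound $\gamma_{k,k}^i \le 1$ from the inductive convex-combination hypothesis rather than your lower bound; both arguments are equally valid and equally short.
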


\begin{proof}Fix any $i\in[n]$.
 We proceed by induction on $k$.  It is clear from $x_0=z_0$ that
$\gamma_{0,0}^{i}=1$.
Since $x_1=y_0+\theta_0p^{-1}\cdot(z_1-z_0)$ and $y_0=x_0$, we get that 
$x_1^ i=(1-\theta_0p_i^{-1})z_0^i+\theta_0p_i^{-1} z_1^i$ 
thus $\gamma_{1,0}^{i}=1-\theta_0p_i^{-1}$ and $\gamma_{1,1}^{i}=\theta_0p_i^{-1}$.
 Assuming that~\eqref{a-x_kisum} holds for some $k\geq 1$, then
\begin{eqnarray*}
 x_{k+1}^i&=& y_k^i+ \theta_k p_i^{-1}(z_{k+1}^i-z_k^i) \;\; =\;\; (1-\theta_k)x_k^i+\theta_k z_k^i- \theta_k p_i^{-1} z_k^i+ \theta_kp_i^{-1} z_{k+1}^i\\
&\overset{\eqref{a-x_kisum}}{=}&(1-\theta_k)\sum_{l=0}^k \gamma_{k,l}^{i} z_l^i+\theta_k z_k^i- \theta_k p_i^{-1} z_k^i+ \theta_kp_i^{-1} z_{k+1}^i\\
&=&\sum_{l=0}^{k-1}(1-\theta_k) \gamma_{k,l}^{i} z_l^i+((1-\theta_k)\gamma_{k,k}^{i}+\theta_k- \theta_k p_i^{-1})z_k^i+ \theta_kp_i^{-1} z_{k+1}^i.
\end{eqnarray*}
Therefore the recursive equation~\eqref{eq-gamk} holds. 
The  identity~\eqref{a-gak1ga} can then be verified by direct substitution.
Next we assume that  $\theta_0\in (0,\min_i p_i]$ and $\{\theta_k\}_{k\geq 0}$ is a decreasing positive sequence and
 show that the linear combination in~\eqref{a-x_kisum} is a convex combination. Let $k\geq 1$.
Since $\theta_k\leq 1$, we deduce from~\eqref{eq-gamk} that
 $\{\gamma_{k+1,l}^ {i}\}_{l=0,\dots,k-1}$ are positive if $\{\gamma_{k,l}^ {i}\}_{l=0,\dots,k-1} $ are positive.  Moreover,
\begin{eqnarray*}
\gamma_{k+1,k}^{i}&\overset{\eqref{eq-gamk}}{=}&
(1-\theta_{k})\gamma_{k,k}^{i}+\theta_{k}- \theta_{k} p_i^{-1}\;\; = \;\; \theta_{k}(1- \gamma_{k,k}^{i})+\gamma_{k,k}^{i}-\theta_{k}p_i^{-1}\\
&\overset{\eqref{eq-gamk}}{=}&\theta_{k}(1- \gamma_{k,k}^{i})+(\theta_{k-1}-\theta_{k})p_i^{-1} \;\; \geq  \;\; \theta_{k}(1-\gamma_{k,k}^{i}).
\end{eqnarray*}
Then using $\theta_k \geq 0$, we conclude that 
$ \gamma_{k+1,k}^ {i} \geq 0$ if $\gamma_{k,k}^{i}\leq 1$. Besides, we have:
\begin{eqnarray*}
\sum_{l=0}^{k+1} \gamma_{k+1,l}^{i}&=&\sum_{l=0}^{k-1}\gamma_{k+1,l}^{i}+\gamma_{k+1,k}^{i}+\gamma_{k+1,k+1}^{i}\\
&\overset{\eqref{eq-gamk}}{=} & (1-\theta_k)\sum_{l=0}^{k-1}\gamma_{k,l}^{i}+(1-\theta_k)\gamma_{k,k}^{i}+\theta_k-\theta_kp_i^{-1}+\theta_kp_i^{-1} \;\; = \;\; (1-\theta_k)\sum_{l=0}^{k}\gamma_{k,l}^{i}+\theta_k.
\end{eqnarray*}
We deduce from the above facts that the coefficients $\{\gamma_{k+1,l}^{i}\}_{l=0,\dots,k+1}$ are all positive and sum to 1
if the same holds for $\{\gamma_{k,l}^{i}\}_{l=0,\dots,k}$. Since $\theta_0\leq \min_i p_i$, we know that 
 $\{\gamma_{1,0}^{i},\gamma_{1,1}^{i}\}$ are positive and sum to 1.
It follows that the same property holds for all $k\in \bN$.
\end{proof}

\begin{lemma}
 For $k\in \bN$ and $i\in[n]$, define 
\begin{align}\label{a-hatpsi}
\hat\psi^i_k \;\; \eqdef\;\;\sum_{l=0}^k \gamma_{k,l}^{i}\psi^i(z_l^i).
\end{align}
Moreover,
\begin{equation}\label{eq-psik1}
\begin{array}{ll}
\E_k[\hat\psi^i_{k+1}] \;\; = \;\; (1-\theta_k)\hat\psi_{k}^i+\theta_k\psi^i(\tilde z_{k+1}^i),\qquad \forall k \in \bN, \quad i\in [n].
\end{array}
\end{equation}
\end{lemma}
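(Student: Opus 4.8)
The plan is to prove the two claims by unravelling the definition of $\hat\psi_k^i$ in terms of the coefficients $\gamma_{k,l}^i$ and combining the recursion for those coefficients (equation~\eqref{eq-gamk}) with the behaviour of $z_{k+1}^i$ under the random sampling (equation~\eqref{a-zk21}). The first claim — that $\hat\psi_k^i$ is simply a (convex) combination of the past values $\psi^i(z_l^i)$ — is just a definition in this lemma, so there is essentially nothing to prove there beyond recalling Lemma~\ref{l-xkzkc} to note that the weights are nonnegative and sum to one when $\theta_0 \le \min_i p_i$ and $\{\theta_k\}$ is decreasing; the real content is the recursion~\eqref{eq-psik1}.

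For~\eqref{eq-psik1}, first I would take the conditional expectation $\E_k[\cdot]$ over the random set $S_k$, conditioned on everything up to iteration $k$. Recall from~\eqref{a-zk21} that $z_{k+1}^i = \tilde z_{k+1}^i$ if $i \in S_k$ and $z_{k+1}^i = z_k^i$ otherwise; importantly, the value $\tilde z_{k+1}^i$ is determined by $y_k$ and $z_k$ alone, hence is $\mathcal F_k$-measurable, so the only randomness in $\psi^i(z_{l}^i)$ for $l \le k$ is absent and the only new term is $\psi^i(z_{k+1}^i)$. Splitting the sum defining $\hat\psi_{k+1}^i$ into the terms $l = 0,\dots,k-1$, the term $l = k$, and the term $l = k+1$, and substituting the three cases of~\eqref{eq-gamk}, I get
\[
\hat\psi_{k+1}^i = (1-\theta_k)\sum_{l=0}^{k-1}\gamma_{k,l}^i\psi^i(z_l^i) + \bigl((1-\theta_k)\gamma_{k,k}^i + \theta_k - \theta_k p_i^{-1}\bigr)\psi^i(z_k^i) + \theta_k p_i^{-1}\psi^i(z_{k+1}^i).
\]
Now I take $\E_k$. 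Since $z_l^i$ for $l \le k$ is $\mathcal F_k$-measurable, only the last term changes: $\E_k[\psi^i(z_{k+1}^i)] = \Prob(i\in S_k)\psi^i(\tilde z_{k+1}^i) + \Prob(i\notin S_k)\psi^i(z_k^i) = p_i\psi^i(\tilde z_{k+1}^i) + (1-p_i)\psi^i(z_k^i)$. Plugging this in, the coefficient of $\psi^i(z_k^i)$ becomes $(1-\theta_k)\gamma_{k,k}^i + \theta_k - \theta_k p_i^{-1} + \theta_k p_i^{-1}(1-p_i) = (1-\theta_k)\gamma_{k,k}^i$, and the coefficient of $\psi^i(\tilde z_{k+1}^i)$ becomes $\theta_k p_i^{-1}\cdot p_i = \theta_k$. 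Collecting the first sum together with the $\psi^i(z_k^i)$ term gives exactly $(1-\theta_k)\sum_{l=0}^k\gamma_{k,l}^i\psi^i(z_l^i) = (1-\theta_k)\hat\psi_k^i$, which yields~\eqref{eq-psik1}.

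I do not anticipate a genuine obstacle here; the proof is a bookkeeping argument. The only point requiring a little care is the measurability claim — that $\tilde z_{k+1}^i$ and the past blocks $z_l^i$ ($l\le k$) are all determined before $S_k$ is drawn — so that the conditional expectation acts only through the indicator $\mathbf 1_{i\in S_k}$; this is immediate from the structure of Algorithm~\ref{alg:general}, where $y_k$ (hence $\tilde z_{k+1}$ via~\eqref{a-tildezk}) is computed prior to generating $S_k$. A minor secondary check is the handling of the base cases $k=0$ (where $\hat\psi_0^i = \psi^i(z_0^i)$ and the recursion reduces to $\E_0[\hat\psi_1^i] = (1-\theta_0)\psi^i(z_0^i) + \theta_0\psi^i(\tilde z_1^i)$, matching $\gamma_{1,0}^i = 1-\theta_0 p_i^{-1}$, $\gamma_{1,1}^i = \theta_0 p_i^{-1}$ after taking expectations), which is consistent with the general computation above.
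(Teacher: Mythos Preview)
Your proof is correct and follows essentially the same approach as the paper's: expand $\hat\psi_{k+1}^i$ via the recursion~\eqref{eq-gamk}, take the conditional expectation using~\eqref{a-zk21} so that $\E_k[\psi^i(z_{k+1}^i)] = p_i\psi^i(\tilde z_{k+1}^i) + (1-p_i)\psi^i(z_k^i)$, and observe the telescoping of the $\psi^i(z_k^i)$ coefficient. The only cosmetic difference is that you substitute~\eqref{eq-gamk} before taking $\E_k$, whereas the paper takes $\E_k$ first and then invokes~\eqref{eq-gamk} together with the identity~\eqref{a-gak1ga}; the computations are identical.
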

\begin{proof}
 \begin{eqnarray*}
\E_k[\hat\psi^i_{k+1}]&=&\displaystyle\sum_{l=0}^k \left[\gamma_{k+1,l}^{i} \psi^i(z_l^i)\right]+\theta_k p_i^{-1} \E_k[\psi^{i}(z_{k+1}^i)]\\
&\overset{\eqref{a-zk21}}{=}&\displaystyle\sum_{l=0}^k \left[\gamma_{k+1,l}^{i} \psi^i(z_l^i)\right]+\theta_kp_i^{-1}\big((1-p_i)\psi^i(z_k^i)+p_i\psi^i(\tilde z_{k+1}^i)\big)\\
&{=}&\displaystyle\sum_{l=0}^k \left[\gamma_{k+1,l}^{i} \psi^i(z_l^i)\right]+(p_i^{-1}-1)\theta_k\psi^i(z_k^i)+\theta_k\psi^i(\tilde z_{k+1}^i)\\
&\overset{\eqref{eq-gamk}}{=}&(1-\theta_k)\displaystyle\sum_{l=0}^{k-1} \left[\gamma_{k,l}^{i} \psi^i(z_l^i)\right]+\big(\gamma_{k+1,k}^{i}+(p_i^{-1}-1)\theta_k\big)\psi^i(z_k^i)+\theta_k\psi^i(\tilde z_{k+1}^i)\\
&\overset{\eqref{eq-gamk}}{=}&(1-\theta_k)\displaystyle\sum_{l=0}^{k-1} \left[\gamma_{k,l}^{i} \psi^i(z_l^i)\right]+\big(\gamma_{k+1,k}^{i}+\gamma_{k+1,k+1}^{i}-\theta_k\big)\psi^i(z_k^i)+\theta_k\psi^i(\tilde z_{k+1}^i)\\
&\overset{\eqref{a-gak1ga}}{=}&(1-\theta_k)\displaystyle\sum_{l=0}^{k} \left[\gamma_{k,l}^{i} \psi^i(z_l^i)\right]+\theta_k\psi^i(\tilde z_{k+1}^i)\\
&\overset{\eqref{a-hatpsi}}{=}&(1-\theta_k)\hat\psi_{k}^i+\theta_k\psi^i(\tilde z_{k+1}^i)
.
\end{eqnarray*}
\end{proof}

The next result was previously stated and used in~\cite{APPROX}.
\begin{lemma}[\cite{ChenTeboulle93,tseng2008accelerated}]\label{l-psix*}
 Let $$\xi(z)\eqdef f(y_k)+\<\nabla f(y_k),z-y_k>+\frac{\theta_k}{2}\|z-z_k\|^2_{p^{-1}\circ v},\enspace z\in \R^N.$$
Then, for $\tilde z_{k+1}$ defined in~\eqref{a-tildezk} we have:
\begin{align}\label{a-psixx}
\psi(\tilde z_{k+1})+\xi(\tilde z_{k+1})\leq \psi(y)+\xi(y)-\frac{\theta_k}{2}\|\tilde z_{k+1}-y\|^2_{p^{-1}\circ v},\enspace y\in \R^ N.
\end{align}
\end{lemma}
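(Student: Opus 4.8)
\textbf{Proof plan for Lemma~\ref{l-psix*}.}

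The plan is to recognize that $\tilde{z}_{k+1}$ is, by its very definition in~\eqref{a-tildezk}, the minimizer of the function $z \mapsto \psi(z) + \xi(z)$, since $\xi(z)$ differs from $\langle \nabla f(y_k), z\rangle + \tfrac{\theta_k}{2}\|z-z_k\|^2_{p^{-1}\circ v}$ only by the constant $f(y_k) - \langle \nabla f(y_k), y_k\rangle$, which does not affect the argmin. So the statement is a strong-convexity estimate for the minimizer of a sum of a convex function $\psi$ and a strongly convex quadratic $\xi$. First I would note that $\xi$ is strongly convex with modulus $\theta_k$ with respect to the norm $\|\cdot\|_{p^{-1}\circ v}$; indeed $\xi(z) = \xi(\tilde z_{k+1}) + \langle \nabla \xi(\tilde z_{k+1}), z - \tilde z_{k+1}\rangle + \tfrac{\theta_k}{2}\|z - \tilde z_{k+1}\|^2_{p^{-1}\circ v}$ exactly, because $\xi$ is a quadratic with Hessian $\theta_k \,\mathrm{Diag}(p^{-1}\circ v)\otimes \bB$ in the appropriate block sense.

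The key step is the first-order optimality condition: since $\tilde z_{k+1}$ minimizes $\psi + \xi$, there exists a subgradient $g \in \partial \psi(\tilde z_{k+1})$ with $g + \nabla \xi(\tilde z_{k+1}) = 0$. Then for any $y \in \R^N$, I would chain together three inequalities. By convexity of $\psi$, $\psi(y) \geq \psi(\tilde z_{k+1}) + \langle g, y - \tilde z_{k+1}\rangle$. By the exact quadratic expansion above, $\xi(y) = \xi(\tilde z_{k+1}) + \langle \nabla\xi(\tilde z_{k+1}), y - \tilde z_{k+1}\rangle + \tfrac{\theta_k}{2}\|y - \tilde z_{k+1}\|^2_{p^{-1}\circ v}$. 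Adding these and using $g = -\nabla\xi(\tilde z_{k+1})$ to cancel the two linear terms gives
\[
\psi(y) + \xi(y) \;\geq\; \psi(\tilde z_{k+1}) + \xi(\tilde z_{k+1}) + \frac{\theta_k}{2}\|y - \tilde z_{k+1}\|^2_{p^{-1}\circ v},
\]
which is exactly~\eqref{a-psixx} after rearranging and using symmetry of the norm, $\|y - \tilde z_{k+1}\| = \|\tilde z_{k+1} - y\|$.

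I do not expect any serious obstacle here; this is a standard ``three-point'' or ``prox-inequality'' lemma (as the citation to \cite{ChenTeboulle93,tseng2008accelerated} signals). The only point requiring a little care is justifying that $\xi$ admits the \emph{exact} second-order Taylor expansion with remainder $\tfrac{\theta_k}{2}\|\cdot\|^2_{p^{-1}\circ v}$ — this follows because the quadratic term of $\xi$ is precisely $\tfrac{\theta_k}{2}\|z - z_k\|^2_{p^{-1}\circ v}$ and the weighted norm is generated by the positive definite block matrix built from the $\bB_i$, so the identity~\eqref{eq:suih9s8hs} applied with $w \leftarrow p^{-1}\circ v$ gives the expansion directly. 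A secondary subtlety is that $\psi$ may take the value $+\infty$, but the inequality is vacuous unless $y \in \dom\psi$, and $\tilde z_{k+1} \in \dom\psi$ since it is the minimizer, so subdifferentiability at $\tilde z_{k+1}$ is not an issue (one may invoke the standard optimality condition for the sum, valid since $\xi$ is finite everywhere and differentiable).
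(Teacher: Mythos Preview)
Your proposal is correct and is precisely the standard three-point/prox inequality argument that the paper defers to: the paper does not give its own proof but simply cites \cite[Lemma~3.2]{ChenTeboulle93}, and your derivation (optimality condition for $\psi+\xi$, subgradient inequality for $\psi$, exact quadratic expansion of $\xi$, cancellation of the linear terms) is exactly that argument.
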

For a proof, see for example~\cite[Lemma 3.2]{ChenTeboulle93}.

\subsection{Recursion}
For all $k\geq 0$ define:
\begin{align}\label{a-hatpsik}
\hat \psi_k\eqdef\sum_{i=1}^n \hat \psi_k^i,\quad \hat F_k\eqdef\hat \psi_k+f(x_k).
\end{align}
 We next prove an inequality similar to the one we established in the smooth case~\eqref{eq-pr12}.

\begin{lemma} \label{lem:recursion2}  Let $\hat{S}$ be an arbitrary proper sampling and  $v\in \R^n_{++}$ be such that $(f,\hat{S})\sim ESO(v)$. Let $\{\theta_k\}_{k\geq 0}$ be arbitrary  sequence of positive numbers in $(0,1]$ and fix $y\in \R^N$. Then for the sequence of iterates produced by Algorithm~\ref{alg:general} and all $k\geq 0$, the  following recursion holds: 
\begin{equation}\label{eq-pr1233}
\E_k \left[\hat F_{k+1} + \frac{\theta_k^2}{2}\| z_{k+1}-y\|^2_{ v\circ p^{-2}}\right] \leq 
\left[ \hat F_k +
\frac{\theta_k^2}{2}\|z_k-y\|^2_{ v \circ p^{-2}}\right] - \theta_k (\hat F_k - F(y)) \enspace.
\end{equation}
\end{lemma}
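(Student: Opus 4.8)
The plan is to mirror the structure of the proof of Lemma~\ref{lem:recursion} from the smooth case, adapting it to account for the proximal term $\psi$ via the machinery of the $\gamma$-coefficients and the auxiliary quantities $\hat\psi_k^i$, $\hat F_k$ established in Lemmas~\ref{l-xkzkc}--\ref{l-psix*}. The starting point is the ESO inequality~\eqref{a-eso} applied at the point $y_k$ with the increment $h$ chosen so that $h_{[S_k]}$ matches the update $x_{k+1}-y_k = \theta_k p^{-1}\cdot(\tilde z_{k+1}-z_k)_{[S_k]}$ (using \eqref{a-xkplus1yk}). Taking conditional expectation $\E_k$, this gives an upper bound on $\E_k[f(x_{k+1})]$ in terms of $f(y_k)$, a linear term $\theta_k\<\nabla f(y_k),\tilde z_{k+1}-z_k>$, and a quadratic term $\tfrac{\theta_k^2}{2}\|\tilde z_{k+1}-z_k\|^2_{v\circ p^{-1}}$.

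First I would handle the smooth part $f$. The linear-plus-quadratic terms in $\tilde z_{k+1}-z_k$ can be recognized, up to constants, as $\theta_k(\xi(\tilde z_{k+1}) - f(y_k))$ where $\xi$ is the function from Lemma~\ref{l-psix*}; more precisely one writes $f(y_k) + \theta_k\<\nabla f(y_k),\tilde z_{k+1}-y_k> + \tfrac{\theta_k^2}{2}\|\tilde z_{k+1}-z_k\|^2_{p^{-1}\circ v} = \theta_k\xi(\tilde z_{k+1}) + (1-\theta_k)\big(f(y_k)+\<\nabla f(y_k),\theta_k p\cdot(\ldots)\text{-type terms}\big)$; the cleanest route is to first use convexity of $f$ to split $\nabla f(y_k)$ against $\tilde z_{k+1}-y_k$ as in the smooth proof, invoking \eqref{a-xkykzk} to turn $y_k-z_k$ into $(1-\theta_k)\theta_k^{-1}(x_k-y_k)$, and then bound $\<\nabla f(y_k),x_k-y_k>\le f(x_k)-f(y_k)$ and $\<\nabla f(y_k),y-y_k>\le f(y)-f(y_k)$. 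Simultaneously I apply Lemma~\ref{l-psix*} with this same $y$ to control the term $\theta_k\xi(\tilde z_{k+1})$ together with $\psi(\tilde z_{k+1})$, producing $\psi(y)$, $\xi(y)$, and a negative term $-\tfrac{\theta_k}{2}\|\tilde z_{k+1}-y\|^2_{p^{-1}\circ v}$.

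Next I would bring in the proximal bookkeeping: by \eqref{eq-psik1}, $\E_k[\hat\psi_{k+1}^i] = (1-\theta_k)\hat\psi_k^i + \theta_k\psi^i(\tilde z_{k+1}^i)$, so summing over $i$ gives $\E_k[\hat\psi_{k+1}] = (1-\theta_k)\hat\psi_k + \theta_k\psi(\tilde z_{k+1})$. Adding this to the bound on $\E_k[f(x_{k+1})]$ and recalling $\hat F_{k+1} = \hat\psi_{k+1}+f(x_{k+1})$, the $\psi(\tilde z_{k+1})$ term is exactly what Lemma~\ref{l-psix*} needs. Collecting everything, the right-hand side should assemble into $(1-\theta_k)\hat F_k + \theta_k F(y) + \tfrac{\theta_k^2}{2}\|z_k-y\|^2_{p^{-1}\circ v} - \tfrac{\theta_k^2}{2}\|\tilde z_{k+1}-y\|^2_{p^{-1}\circ v}$, using that $\hat F_k \ge f(x_k)$ would be the wrong direction, so one must instead verify that the $f(x_k)$ and $\hat\psi_k$ pieces combine precisely into $\hat F_k$ with coefficient $(1-\theta_k)$ — this is where the convexity bound $f(x_k)\le \sum_l \gamma_{k,l}^i(\ldots)$ is \emph{not} needed, only the identities are. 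Finally, I convert the last two quadratic terms using Lemma~\ref{lem:expect} with $w\leftarrow v\circ p^{-2}$, $x\leftarrow z_k-y$, $a\leftarrow \tilde z_{k+1}-z_k$, $\hat S\leftarrow S_k$, turning $\tfrac{\theta_k^2}{2}(\|z_k-y\|^2_{v\circ p^{-1}} - \|\tilde z_{k+1}-y\|^2_{v\circ p^{-1}})$ into $\tfrac{\theta_k^2}{2}(\|z_k-y\|^2_{v\circ p^{-2}} - \E_k\|z_{k+1}-y\|^2_{v\circ p^{-2}})$; rearranging yields \eqref{eq-pr1233}.

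The main obstacle I anticipate is the careful matching of the block-weighted norms: the ESO inequality and Lemma~\ref{l-psix*} are stated with the norm $\|\cdot\|_{p^{-1}\circ v}$ (coming from the stepsize $\theta_k v_i/(2p_i)$ in the algorithm's proximal subproblem), whereas the recursion target uses $\|\cdot\|_{v\circ p^{-2}}$, and it is precisely Lemma~\ref{lem:expect} that reconciles the two — but only after the expectation over $S_k$ is taken, so the order of operations (apply ESO and Lemma~\ref{l-psix*} first, take $\E_k$, then invoke Lemma~\ref{lem:expect}) must be respected. A secondary delicate point is ensuring the linear term $\<\nabla f(y_k),\tilde z_{k+1}-z_k>$ produced by ESO is exactly the one appearing inside $\xi(\tilde z_{k+1})$ after accounting for the factor $\theta_k p^{-1}$ and the $\<\cdot,\cdot>_p$ pairing in \eqref{a-eso}; tracking these Hadamard products correctly is the bulk of the routine computation.
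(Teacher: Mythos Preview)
Your proposal is correct and follows essentially the same route as the paper's proof: apply ESO at $y_k$ with $h=\theta_k p^{-1}\cdot(\tilde z_{k+1}-z_k)$ to bound $\E_k[f(x_{k+1})]$, use \eqref{a-xkykzk} to split the linear term into pieces against $x_k-y_k$ and $y-y_k$, combine with the identity \eqref{eq-psik1} for $\E_k[\hat\psi_{k+1}]$, invoke Lemma~\ref{l-psix*} to handle $\psi(\tilde z_{k+1})+\xi(\tilde z_{k+1})$, use convexity of $f$ on the two remaining linear terms, and finally convert the $\|\cdot\|_{p^{-1}\circ v}$ quadratics to $\|\cdot\|_{v\circ p^{-2}}$ via Lemma~\ref{lem:expect} (the paper cites \eqref{a-zk21} directly for this last step, which amounts to the same computation). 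Your anticipated obstacles are exactly the ones the proof has to handle, and your proposed resolutions are the right ones.
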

\begin{proof}
If $(f,\hat S)\sim ESO(v)$, then,
 \begin{eqnarray} \notag
\E_k[f(x_{k+1})]&\overset{\eqref{a-xkplus1yk}}{=}&\E_k[f(y_k+\theta_k p^{-1}  \cdot(\tilde z_{k+1}-z_k)_{[S_k]})]\\ \notag
& \overset{\eqref{a-eso}}{\leq }&f(y_k)+\theta_k\<\nabla f(y_k),\tilde z_{k+1}-z_k>+\frac{\theta_k^2}{2}\|\tilde z_{k+1}-z_k\|^2_{p^{-1}\circ v}\\ \notag
&=&(1-\theta_k)f(y_k)-\theta_k\<\nabla f(y_k),z_k-y_k>\\ \notag
&&\qquad +\theta_k\big(f(y_k)+\<\nabla f(y_k),\tilde z_{k+1}-y_k>+\frac{\theta_k}{2}\|\tilde z_{k+1}-z_k\|^2_{p^{-1}\circ v}\big)\\ \notag
&\overset{\eqref{a-xkykzk}}{=}&(1-\theta_k)(f(y_k)+\<\nabla f(y_k),x_k-y_k>)\\
&&\qquad +\theta_k\big(f(y_k)+\<\nabla f(y_k),\tilde z_{k+1}-y_k>+\frac{\theta_k}{2}\|\tilde z_{k+1}-z_k\|^2_{p^{-1}\circ v}\big)
.\label{eq-Ekfk1}
\end{eqnarray}
We first write
\begin{eqnarray}\notag
\E_k[\hat F_{k+1}]&=&\displaystyle\E_k[\sum_{i=1}^n\hat \psi^i_{k+1}+f(x_{k+1})] \;\; \overset{\eqref{eq-psik1}}{=} \;\; \sum_{i=1}^n \left[(1-\theta_k)\hat \psi^i_{k}+\theta_k \psi^i(\tilde z_{k+1}^i)\right]+\E_k[f(x_{k+1})]
\\ 
&=&(1-\theta_k)\hat\psi_k+ \theta_k \psi(\tilde z_{k+1})+\E_k[f(x_{k+1})], \label{eq:is98g98shksss}
\end{eqnarray}
and then bound the expectation of $\hat F_{k+1}$ as follows:
\begin{eqnarray}\notag
&\qquad\E_k[\hat F_{k+1}]&\\  \notag
&\overset{\eqref{eq:is98g98shksss} 
+\eqref{eq-Ekfk1}}{\leq}& (1-\theta_k)\hat \psi_k+(1-\theta_k)(f(y_k)+\<\nabla f(y_k),x_k-y_k>)\\\notag
&&\qquad +\theta_k\big( \psi(\tilde z_{k+1})+f(y_k)+\<\nabla f(y_k),\tilde z_{k+1}-y_k>+\frac{\theta_k}{2}\|\tilde z_{k+1}-z_k\|^2_{p^{-1}\circ v}\big)\\ \notag
&\overset{\eqref{a-psixx}}{\leq} &(1-\theta_k)\hat \psi_k+(1-\theta_k)(f(y_k)+\<\nabla f(y_k),x_k-y_k>)\\\notag
&&\quad +\theta_k\big( \psi(y)+f(y_k)+\<\nabla f(y_k),y-y_k>+\frac{\theta_k}{2}\|z_k-y\|^2_{p^{-1}\circ v}-\frac{\theta_k}{2}\|\tilde z_{k+1}-y\|^2_{p^{-1}\circ v}\big)\\\notag
&\leq &(1-\theta_k)\hat \psi_k+(1-\theta_k)f(x_k)\\ \notag
&&\qquad +\theta_k\big( \psi(y)+f(y)+\frac{\theta_k}{2}\|z_k-y\|^2_{p^{-1}\circ v}-\frac{\theta_k}{2}\|\tilde z_{k+1}-y\|^2_{p^{-1}\circ v}\big)\\ \notag
&=&(1-\theta_k)\hat F_k+\theta_k F(y)+\frac{\theta_k^2}{2}\left(\|z_k-y\|^2_{p^{-1}\circ v}-\|\tilde z_{k+1}-y\|^2_{p^{-1}\circ v}\right)\\
&\overset{\eqref{a-zk21}}{=}&(1-\theta_k)\hat F_k+\theta_k F(y)+\frac{\theta_k^2}{2}\E_k\left[\|z_k-y\|^2_{p^{-2}\circ v}-\|y- z_{k+1}\|^2_{p^{-2}\circ v}\right]
,\enspace \forall y\in \R^ N. \notag
\end{eqnarray}
Therefore, for all $y\in \R^N$,
\[
\E_k\left[ \hat F_{k+1}-F(y)+\frac{\theta_k^2}{2}\|y- z_{k+1}\|^2_{p^{-2}\circ v}\right] 
\leq (1-\theta_k)( \hat F_k-F(y))+\frac{\theta_k^2}{2}\|z_k-y\|^2_{p^{-2}\circ v}\enspace.\qedhere
\]
\end{proof}

\subsection{Proof of Theorems~\ref{th-proximal_normal} and~\ref{th-proximal_fast}}\label{sec-proof}

Using the same reasoning  as that in the proof of Theorems~\ref{th-smooth_normal} and \ref{th-smooth_fast},  we analyze recursion of Lemma~\ref{lem:recursion2} and obtain:
\begin{itemize}
 \item If $\theta_k=\theta_0 \in (0,1]$ for all $k\geq 0$, then
$$
 \frac{\E\left[  \hat F_k-F(y)+\theta_0\sum_{l=1}^{k-1} (\hat F_{l}-F(y))\right]}{1+(k-1)\theta_0}  \leq \frac{(1-\theta_0)(\hat F_0-F(y))+\frac{\theta_0^2}{2}\|x_0-y\|_{v\circ p^{-2}}}{1+\theta_0(k-1) },\enspace \forall k\geq 1.
$$

\item If $\theta_{k+1}=\frac{\sqrt{\theta_k^4+4\theta_k^2}-\theta_k^2}{2}$ for all $k\geq 0$, $\theta_0\in (0,1]$ and $\hat F_0 \leq F(y)$, then
\begin{align}\label{a-EhatFkF*}
\E \left[\hat F_{k+1}-F(y)\right] \leq \frac{4\left((1-\theta_0)(\hat F_0-F(y))+\frac{\theta_0^2}{2}\|x_0-y\|_{v\circ p^{-2}} \right)}{(\theta_0(k-1)+2)^2},\enspace \forall k\geq 1.
\end{align}
\end{itemize}
Finally, by Lemma~\ref{l-xkzkc}, for the latter two choices of $\{\theta_k\}$, if in addition $\theta_0\in (0,\min_i p_i]$, then for $k\geq 1$, each block of the vector $x_k$ is a convex combination of the corresponding blocks of the vectors $z_0,\dots,z_k$. By the convexity of
each function $\psi^i$, we get:
\begin{align}\label{a-psiwksum}
\psi(x_k)=
\sum_{i=1}^n\psi^i(x_k^i)=\sum_{i=1}^n\psi^i(\sum_{l=0}^k\gamma_{k,l}^{i}z_l^i)\leq \sum_{i=1}^n \sum_{l=0}^k \gamma_{k,l}^{i}\psi^i(z_l^i)=\hat\psi_k.
\end{align}
Hence, Theorem~\ref{th-proximal_normal} and~\ref{th-proximal_fast} hold  by the fact that $F(x_k)\leq \hat F_{k} $ for all $k\in \bN$ and $\hat F_0=F(x_0)$. Note that the condition $\theta_0\in (0,\min_i p_i]$ is only needed to prove~\eqref{a-psiwksum}. Thus it can be relaxed to $\theta_0\in (0,1]$ if $\psi \equiv 0$, in which case $\hat F_k= F(x_k)$ and Theorem~\ref{th-smooth_normal} and~\ref{th-smooth_fast} follows.

\section{Conclusion}\label{sec:conclusion}

In this paper we propose a general randomized coordinate descent method which can be specialized to serial or parallel and accelerated or non-accelerated variants, with or without importance sampling. Based on the technical assumption which captures 
in a compact way certain smoothness properties of the function in a random 
subspace spanned by the sampled coordinates, we provide a unified complexity analysis which allows to derive as direct corollary the convergence results for the multiple variants of the general algorithm. We focused on the minimization of non-strongly convex function. Further study on a unified algorithm and complexity analysis for both strongly and non-strongly convex objective functions can be investigated.

{
\bibliographystyle{plain}
\bibliography{biblio}

\begin{thebibliography}{10}

\bibitem{AuslenderTeboulle06}
Alfred Auslender and Marc Teboulle.
\newblock Interior gradient and proximal methods for convex and conic
  optimization.
\newblock {\em SIAM Journal on Optimization}, 16(3):697--725, 2006.

\bibitem{ChenTeboulle93}
Gong Chen and Marc Teboulle.
\newblock Convergence analysis of a proximal-like minimization algorithm using
  bregman functions.
\newblock {\em SIAM Journal on Optimization}, 3(3):538--543, 1993.

\bibitem{SAGA}
Aaron Defazio, Francis Bach, and Simon Lacoste-Julien.
\newblock {SAGA}: A fast incremental gradient method with support for
  non-strongly convex composite objectives.
\newblock {\em Advances in Neural Information Processing Systems 27 (NIPS
  2014)}, 2014.

\bibitem{PCD2007}
Michael Elad, Boaz Matalon, and Michael Zibulevsky.
\newblock Coordinate and subspace optimization methods for linear least squares
  with non-quadratic regularization.
\newblock {\em Applied and Computational Harmonic Analysis}, pages 346--367,
  2007.

\bibitem{Hydra2}
Olivier Fercoq, Zheng Qu, Peter Richt\'{a}rik, and Martin Tak{\'a}{\v{c}}.
\newblock Fast distributed coordinate descent for minimizing non-strongly
  convex losses.
\newblock {\em IEEE International Workshop on Machine Learning for Signal
  Processing}, 2014.

\bibitem{APPROX}
Olivier Fercoq and Peter Richt\'{a}rik.
\newblock Accelerated, parallel and proximal coordinate descent.
\newblock {\em SIAM Journal on Optimization (after minor revision),
  arXiv:1312.5799}, 2013.

\bibitem{SPCDM}
Olivier Fercoq and Peter Richt\'arik.
\newblock Smooth minimization of nonsmooth functions by parallel coordinate
  descent.
\newblock {\em arXiv:1309.5885}, 2013.

\bibitem{SDCA-2008}
C-J. Hsieh, K-W. Chang, C-J. Lin, S.S. Keerthi, and S.~Sundarajan.
\newblock A dual coordinate descent method for large-scale linear {SVM}.
\newblock In {\em ICML}, 2008.

\bibitem{SVRG}
Rie Johnson and Tong Zhang.
\newblock Accelerating stochastic gradient descent using predictive variance
  reduction.
\newblock In {\em NIPS}, 2013.

\bibitem{mS2GD}
Jakub Kone\v{c}n\'{y}, Jie Lu, Peter Richt\'{a}rik, and Martin Tak\'{a}\v{c}.
\newblock m{S2GD}: {M}ini-batch semi-stochastic gradient descent in the
  proximal setting.
\newblock {\em arXiv:1410.4744}, 2014.

\bibitem{S2CD}
Jakub Kone\v{c}n\'{y}, Zheng Qu, and Peter Richt\'{a}rik.
\newblock Semi-stochastic coordinate descent.
\newblock {\em arXiv:1412.6293}, 2014.

\bibitem{S2GD}
Jakub Kone\v{c}n\'{y} and Peter Richt\'{a}rik.
\newblock {S2GD}: {S}emi-stochastic gradient descent methods.
\newblock {\em arXiv:1312.1666}, 2014.

\bibitem{lee2013efficient}
Yin~Tat Lee and Aaron Sidford.
\newblock Efficient accelerated coordinate descent methods and faster
  algorithms for solving linear systems.
\newblock {\em arXiv:1305.1922}, 2013.

\bibitem{APCG}
Qihang Lin, Zhaosong Lu, and Lin Xiao.
\newblock An accelerated proximal coordinate gradient method and its
  application to regularized empirical risk minimization.
\newblock Technical Report MSR-TR-2014-94, July 2014.

\bibitem{WrightAsynchrous14}
Ji~Liu and Stephen~J. Wright.
\newblock Asynchronous stochastic coordinate descent: Parallelism and
  convergence properties.
\newblock {\em SIAM Journal on Optimization}, 25(1):351--376, 2015.

\bibitem{WrightAsynchrous13}
Ji~Liu, Stephen~J. Wright, Christopher R{{\'e}}, Victor Bittorf, and Srikrishna
  Sridhar.
\newblock An asynchronous parallel stochastic coordinate descent algorithm.
\newblock {\em Journal of Machine Learning Research}, 16:285--322, 2015.

\bibitem{RBCDMLuXiao}
Zhaosong Lu and Lin Xiao.
\newblock On the complexity analysis of randomized block-coordinate descent
  methods.
\newblock {\em arXiv:1305.4723}, 2013.

\bibitem{Mixed2013}
Mehrdad Mahdavi and Rong Jin.
\newblock Mixedgrad: An $o(1/t)$ convergence rate algorithm for stochastic
  smooth optimization.
\newblock {\em arXiv:1307.7192v1}, 2013.

\bibitem{MISO}
Julien Mairal.
\newblock Incremental majorization-minimization optimization with application
  to large-scale machine learning.
\newblock {\em SIAM Journal on Optimization}, 25(2):829--855, 2015.

\bibitem{Necoara:Parallel}
Ion Necoara and Dragos Clipici.
\newblock Efficient parallel coordinate descent algorithm for convex
  optimization problems with separable constraints: Application to distributed
  \{MPC\}.
\newblock {\em Journal of Process Control}, 23(3):243 -- 253, 2013.

\bibitem{Necoara:rcdm-coupled}
Ion Necoara and Andrei Patrascu.
\newblock A random coordinate descent algorithm for optimization problems with
  composite objective function and linear coupled constraints.
\newblock {\em Computational Optimization and Applications}, 57:307--337, 2014.

\bibitem{nemirovski2009robust}
Arkadi Nemirovski, Anatoli Juditsky, Guanghui Lan, and Alexander Shapiro.
\newblock Robust stochastic approximation approach to stochastic programming.
\newblock {\em SIAM Journal on Optimization}, 19(4):1574--1609, 2009.

\bibitem{NesterovIntro}
Yurii Nesterov.
\newblock {\em {Introductory Lectures on Convex Optimization: A Basic Course
  (Applied Optimization)}}.
\newblock Springer Netherlands, 1 edition.

\bibitem{Nesterov:2010RCDM}
Yurii Nesterov.
\newblock Efficiency of coordinate descent methods on huge-scale optimization
  problems.
\newblock {\em SIAM Journal on Optimization}, 22(2):341--362, 2012.

\bibitem{paper2}
Zheng Qu and Peter Richt\'{a}rik.
\newblock Coordinate descent methods with arbitrary sampling {\rom{2}}:
  Expected separable overapproximation.
\newblock {\em Technical report}, 2014.

\bibitem{QUARTZ}
Zheng Qu, Peter Richt\'{a}rik, and Tong Zhang.
\newblock Randomized dual coordinate ascent with arbitrary sampling.
\newblock {\em arXiv:1411.5873}, 2014.

\bibitem{Grenoble_theory}
Peter Richt\'{a}rik.
\newblock Randomized coordinate descent for big data optimization (theory).
\newblock Technical report, School of Mathematics, University of Edinburgh,
  June 2014.

\bibitem{Hydra}
Peter Richt{\'a}rik and Martin Tak{\'a}{\v{c}}.
\newblock Distributed coordinate descent method for learning with big data.
\newblock {\em arXiv:1310.2059}, 2013.

\bibitem{NSync}
Peter Richt{\'a}rik and Martin Tak{\'a}{\v{c}}.
\newblock On optimal probabilities in stochastic coordinate descent methods.
\newblock {\em arXiv:1310.3438}, 2013.

\bibitem{TTD}
Peter Richt\'{a}rik and Martin Tak\'{a}\v{c}.
\newblock Efficient serial and parallel coordinate descent method for
  huge-scale truss topology design.
\newblock In {\em Operations Research Proceedings}, pages 27--32. Springer,
  2012.

\bibitem{UCDC}
Peter Richt\'{a}rik and Martin Tak\'{a}\v{c}.
\newblock Iteration complexity of randomized block-coordinate descent methods
  for minimizing a composite function.
\newblock {\em Mathematical Programming}, 144(2):1--38, 2014.

\bibitem{PCDM}
Peter Richt\'{a}rik and Martin Tak\'{a}\v{c}.
\newblock Parallel coordinate descent methods for big data optimization.
\newblock {\em Mathematical Programming}, pages 1--52, 2015.

\bibitem{SAG}
Mark Schmidt, Nicolas Le~Roux, and Francis Bach.
\newblock Minimizing finite sums with the stochastic average gradient.
\newblock {\em arXiv:1309.2388}, 2013.

\bibitem{Pegasos-MAPR}
Shai Shalev-Shwartz, Yoram Singer, Nati Srebro, and Andrew Cotter.
\newblock Pegasos: Primal estimated sub-gradient solver for {SVM}.
\newblock {\em Mathematical Programming}, pages 3--30, 2011.

\bibitem{Shalev-Shwartz2011}
Shai Shalev-Shwartz and Ambuj Tewari.
\newblock Stochastic methods for l1-regularized loss minimization.
\newblock {\em J. Mach. Learn. Res.}, 12:1865--1892, July 2011.

\bibitem{MinibatchASDCA}
Shai Shalev-Shwartz and Tong Zhang.
\newblock Accelerated mini-batch stochastic dual coordinate ascent.
\newblock In {\em Advances in Neural Information Processing Systems 26}, pages
  378--385. 2013.

\bibitem{APROXSDCA}
Shai Shalev-Shwartz and Tong Zhang.
\newblock Accelerated proximal stochastic dual coordinate ascent for
  regularized loss minimization.
\newblock {\em Mathematical Programming}, pages 1--41, 2014.

\bibitem{Pegasos2}
Martin Tak{\'a}{\v{c}}, Avleen Bijral, Peter Richt{\'a}rik, and Nathan Srebro.
\newblock Mini-batch primal and dual methods for {SVM}s.
\newblock In {\em ICML}, 2013.

\bibitem{DQA}
Rachael Tappenden, Peter Richt\'{a}rik, and Burak B\"{u}ke.
\newblock Separable approximations and decomposition methods for the augmented
  lagrangian.
\newblock {\em Optimization Methods and Software}, 2014.

\bibitem{ICD}
Rachael Tappenden, Peter Richt\'{a}rik, and Jacek Gondzio.
\newblock Inexact block coordinate descent method: complexity and
  preconditioning.
\newblock {\em arXiv:1304.5530}, 2013.

\bibitem{tseng2008accelerated}
Paul Tseng.
\newblock On accelerated proximal gradient methods for convex-concave
  optimization.
\newblock {\em Submitted to SIAM Journal on Optimization}, 2008.

\bibitem{proxSVRG}
Lin Xiao and Tong Zhang.
\newblock A proximal stochastic gradient method with progressive variance
  reduction.
\newblock {\em SIAM Journal on Optimization}, 24(4):2057--2075, 2014.

\bibitem{zhanglijun}
Lijun Zhang, Mehrdad Mahdavi, and Rong Jin.
\newblock Linear convergence with condition number independent access of full
  gradients.
\newblock {\em NIPS}, 2013.

\bibitem{tongSGD}
Tong Zhang.
\newblock Solving large scale linear prediction problems using stochastic
  gradient descent algorithms.
\newblock In {\em ICML}, 2004.

\bibitem{SPDC}
Yuchen Zhang and Lin Xiao.
\newblock Stochastic primal-dual coordinate method for regularized empirical
  risk minimization.
\newblock ICML, 2015.

\bibitem{IProx-SDCA}
Peilin Zhao and Tong Zhang.
\newblock Stochastic optimization with importance sampling.
\newblock ICML, 2015.

\end{thebibliography}
}

\appendix
\clearpage

\section{Special cases of ALPHA in the proximal setup}

Extending the discussion presented in Section~\ref{sec-specialcasesm} which focused on the smooth case, we now present four  special cases of Algorithm~\ref{alg:general} for solving problem~\eqref{eq-probse}.  

\subsection{Special case 1: proximal gradient descent}
Specializing Algorithm~\ref{alg:general} to
 $\hat S=[n]$  and $\theta_k= 1$ for all $k\geq 1$, we obtain the classical proximal gradient descent algorithm.  Note that in this case the three sequences $\{x_k,y_k,z_k\}_{k\geq 0}$ reduce to one sequence.
\begin{algorithm}[H]
\begin{algorithmic}[1]
\STATE \textbf{Parameters:} vector $v\in \R_{++}^ n$ such that~\eqref{a-esofull1} holds
\STATE \textbf{Initialization:} choose $x_0\in \dom \psi$
\FOR{$k\geq 0$} 
\FOR{ $i\in [n]$} 
\STATE
$
x_{k+1}^i=\argmin_{x\in \R^{N_i}} \big\{\< \nabla_i f(x_k), x>+\frac{v_i }{2 } \|x-x_k^i\|_i^2+\psi^i(x) \big\}
$ \\
\ENDFOR\\
\ENDFOR
\end{algorithmic}
\caption{Proximal Gradient Descent for solving~\ref{eq-probse}}
\label{alg:pgd}
\end{algorithm}
\begin{coro}\label{coro-pgd}
 For any optimal solution $x_*$ of~\eqref{eq-probse}, the output of Algorithm~\ref{alg:pgd} for 
all $k\geq 1$ satisfies: 
\begin{align}\label{a-pfxkfstarleq}
F(x_{k})-F(x_*)\leq  
\displaystyle\frac{\|x_0-x_*\|^2_{v}}{2k}.
\end{align}
In particular, for $\epsilon>0 $, if 
\begin{align}\label{a-coro-pgd}
k\geq \frac{\|x_0-x_*\|^2_{ v}}{2\epsilon},
\end{align}
then $
F(x_k)-F(x_*)\leq \epsilon$.
\end{coro}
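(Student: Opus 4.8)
The plan is to derive this as an immediate specialization of the nonaccelerated proximal complexity bound (Theorem~\ref{th-proximal_normal}) to the deterministic full sampling $\hat S = [n]$. First I would observe that with $\hat S = [n]$ we have $p_i = 1$ for all $i\in[n]$, so the sampling is proper and $\min_i p_i = 1$; hence the admissible range $(0,\min_i p_i]$ for $\theta_0$ collapses to the single value $\theta_0 = 1$, which is exactly the choice made in Algorithm~\ref{alg:pgd}. I would also note that in this regime the ESO inequality~\eqref{a-eso} reduces to the Lipschitz-type condition~\eqref{a-esofull1} (plug in $p = e$), which is precisely the parameter assumption stated in Algorithm~\ref{alg:pgd}. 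Finally, since $\theta_k p_i = 1$ for all $i,k$, the three sequences $\{x_k,y_k,z_k\}$ coincide, so the update in line 5 of Algorithm~\ref{alg:pgd} is literally line 8 of Algorithm~\ref{alg:general} with $z_k^i = x_k^i$ and $p_i = v_i/v_i = 1$; thus Algorithm~\ref{alg:pgd} is genuinely the special case of ALPHA to which Theorem~\ref{th-proximal_normal} applies.

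Next I would invoke Theorem~\ref{th-proximal_normal} with $\theta_0 = 1$, $p = e$, and the choice $y = x_*$. With $\theta_0 = 1$ the constant simplifies: $C = (1-\theta_0)(F(x_0)-F(x_*)) + \tfrac{\theta_0^2}{2}\|x_0-x_*\|_{v\circ p^{-2}}^2 = \tfrac{1}{2}\|x_0-x_*\|_v^2$, since $p^{-2} = e$ and the first term vanishes. Moreover, because the sequences coincide, $\hat x_k = \frac{x_k + \sum_{l=1}^{k-1} x_l}{k}$ and, more importantly, $\min_{l=1,\dots,k}\E[F(x_l)] = \min_{l}F(x_l)$ (there is no randomness). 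To pin down that $F(x_k)$ is actually the minimum over $l\le k$, I would use monotonicity: applying Lemma~\ref{lem:recursion2} (or directly the recursion~\eqref{eq-pr1233}) with $y = x_k$ and $\theta_k = 1$ gives $\hat F_{k+1} + \tfrac12\|z_{k+1}-x_k\|_{v\circ p^{-2}}^2 \le \hat F_k + \tfrac12\|z_k - x_k\|_{v\circ p^{-2}}^2 - (\hat F_k - F(x_k))$; since $z_k = x_k$ here the last two terms cancel the quadratic on the right and (using $\hat F_k = F(x_k)$ in the deterministic case) one obtains $F(x_{k+1}) \le F(x_k)$. Hence $F(x_k) = \min_{l=1,\dots,k} F(x_l)$, and the left-hand side of~\eqref{eq-unif7s} is just $F(x_k) - F(x_*)$.

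Combining these observations, the bound~\eqref{eq-unif7s} reads
\[
F(x_k) - F(x_*) \;\le\; \frac{C}{(k-1)\cdot 1 + 1} \;=\; \frac{\tfrac12\|x_0-x_*\|_v^2}{k},
\]
which is exactly~\eqref{a-pfxkfstarleq}. The second claim~\eqref{a-coro-pgd} then follows by a one-line rearrangement: if $k \ge \|x_0-x_*\|_v^2/(2\epsilon)$ then $\|x_0-x_*\|_v^2/(2k) \le \epsilon$, so $F(x_k) - F(x_*) \le \epsilon$. I do not anticipate a genuine obstacle here; the only point requiring a small amount of care is justifying that $F(x_k)$ equals the running minimum (so that the $\max$--$\min$ expression in Theorem~\ref{th-proximal_normal} reduces cleanly), and this is handled by the monotonicity argument above, which is the standard descent property of proximal gradient descent under the stepsize implied by~\eqref{a-esofull1}.
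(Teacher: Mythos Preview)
Your proposal is correct and mirrors the paper's own argument: the paper also proves Corollary~\ref{coro-pgd} by specializing Theorem~\ref{th-proximal_normal} to $\hat S=[n]$, $\theta_0=1$, invoking~\eqref{a-xyz} so that $x_k=y_k=z_k$, and then appealing to the monotonicity argument from the proof of Corollary~\ref{coro-gd} (i.e., plugging $y=x_k$ into the key recursion to obtain $F(x_{k+1})\le F(x_k)$). One cosmetic quibble: the admissible range $(0,\min_i p_i]$ does not ``collapse to the single value $\theta_0=1$''---it becomes $(0,1]$---but you are simply \emph{choosing} $\theta_0=1$, which is fine.
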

The proof follows as a corollary of Theorem~\ref{th-proximal_normal}, with additional remark~\eqref{a-xyz} which holds in this special case. The reader can refer to the proof of Corollary~\ref{coro-gd}.

Corollary~\ref{coro-pgd} can be found in classical textbooks on convex optimization, see for example~\cite{Nesterov:2010RCDM}.

\subsection{Special case 2: accelerated proximal gradient descent}

By choosing $\hat S=[n]$ (with probability 1), $\theta_0=1$ and the sequence $\{\theta_k\}_{k\geq 0}$ according to~\eqref{a-thetak+1}, ALPHA (Algorithm~\ref{alg:general}) reduces to  accelerated proximal gradient descent.

\begin{algorithm}[H]
\begin{algorithmic}[1]
\STATE \textbf{Parameters:} vector $v\in \R_{++}^ n$ such that~\eqref{a-esofull1} holds
\STATE \textbf{Initialization:} {choose $x_0\in \dom(\psi)$, set $z_0=x_0$ and $\theta_0=1$}
\FOR{$k\geq 0$} 
\FOR{ $i\in [n]$} 
\STATE
$
z_{k+1}^i=\argmin_{z\in \R^{N_i}} \big\{\< \nabla_i f((1-\theta_k)x_k+\theta_kz_k), z>+\frac{\theta_k  v_i }{2p_i} \|z-z_k^i\|_i^2+\psi^i(z) \big\}
$ \\
\ENDFOR\\
\STATE $x_{k+1}=(1-\theta_k)x_k+\theta_k z_{k+1}$\\
\STATE $\theta_{k+1}=\frac{\sqrt{\theta_k^4+4\theta_k^2}-\theta_k^2}{2}$
\ENDFOR
\end{algorithmic}
\caption{Accelerated proximal gradient descent~\cite{AuslenderTeboulle06}}
\label{alg:apgd}
\end{algorithm}

\begin{coro}\label{coro-apgd}
 For any optimal solution $x_*$ of~\eqref{eq-probse}, the output of Algorithm~\ref{alg:apgd} for 
all $k\geq 1$ satisfies: 
$$
F(x_k)-F(x_*)\leq  \frac{2\|x_0-x_*\|^2_{ v}}{(k+1)^2}.\quad 
$$
In particular, for $0<\epsilon<  \frac{1}{2}\|x_0-x_*\|^2_{ v} $, if 
\begin{align}\label{a-coro-apgd}
k\geq \sqrt{\frac{2\|x_0-x_*\|^2_{ v}}{\epsilon}}-1,
\end{align}
then $ F(x_k)-F(x_*)\leq \epsilon$.
\end{coro}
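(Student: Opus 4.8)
The plan is to obtain Corollary~\ref{coro-apgd} as an immediate specialization of Theorem~\ref{th-proximal_fast}, mirroring the way Corollary~\ref{coro-accgd} was deduced from Theorem~\ref{th-smooth_fast} in the smooth case. First I would note that Algorithm~\ref{alg:apgd} is precisely ALPHA (Algorithm~\ref{alg:general}) run with the deterministic sampling $\hat S=[n]$, with $\theta_0=1$, and with $\{\theta_k\}_{k\geq0}$ generated by~\eqref{a-thetak+1}. Indeed, when $S_k=[n]$ almost surely we have $p_i=\Prob(i\in\hat S)=1$ for all $i\in[n]$, so $p^{-1}\cdot h=h$; then Step~10 of Algorithm~\ref{alg:general} reads $x_{k+1}=y_k+\theta_k(z_{k+1}-z_k)=(1-\theta_k)x_k+\theta_k z_{k+1}$, and Step~8 becomes the prox-update on line~5 of Algorithm~\ref{alg:apgd} after substituting $y_k=(1-\theta_k)x_k+\theta_k z_k$. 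Hence the two algorithms produce the same iterates $\{x_k\}$, and $\hat S=[n]$ is trivially a proper sampling, so Theorem~\ref{th-proximal_fast} is applicable.

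Next I would check its hypotheses. Because $\hat S=[n]$, the ESO inequality~\eqref{a-eso} collapses to~\eqref{a-esofull1} (see Section~\ref{sec-specialcasesm}), so $(f,\hat S)\sim ESO(v)$ is exactly the standing assumption on $v$ in Algorithm~\ref{alg:apgd}; and $\theta_0=1\in(0,\min_i p_i]=(0,1]$. Taking $y=x_*$ in Theorem~\ref{th-proximal_fast} and using $p_i=1$ (so $v\circ p^{-2}=v$), the constant becomes
\[
C=(1-\theta_0)\bigl(F(x_0)-F(x_*)\bigr)+\frac{\theta_0^2}{2}\|x_0-x_*\|_{v\circ p^{-2}}^2=\frac{1}{2}\|x_0-x_*\|_v^2\ge 0,
\]
so the requirement $C\ge 0$ holds automatically — in particular no level-set or boundedness assumption is needed. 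The conclusion~\eqref{eq-unifisu87556s} then gives $\E[F(x_k)]-F(x_*)\le 4C/(k+1)^2=2\|x_0-x_*\|_v^2/(k+1)^2$, and since Algorithm~\ref{alg:apgd} involves no randomness the expectation is vacuous, which is the first bound of the corollary.

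For the ``in particular'' statement I would simply rearrange: $2\|x_0-x_*\|_v^2/(k+1)^2\le\epsilon$ is equivalent to $k\ge\sqrt{2\|x_0-x_*\|_v^2/\epsilon}-1$, which is~\eqref{a-coro-apgd}. The restriction $0<\epsilon<\frac{1}{2}\|x_0-x_*\|_v^2$ enters only to ensure that this lower bound on $k$ is at least $1$, so that it falls in the range $k\ge 1$ for which the corollary is stated. I do not anticipate any genuine obstacle: the substance of the result is already contained in Theorem~\ref{th-proximal_fast}, and the only steps needing justification are the identification of Algorithm~\ref{alg:apgd} with the right special case of ALPHA and the one-line evaluation of $C$ and of the reduced ESO condition; alternatively one may observe that the proof of Corollary~\ref{coro-accgd} carries over verbatim, replacing $f$ by $F$ and Theorem~\ref{th-smooth_fast} by Theorem~\ref{th-proximal_fast}.
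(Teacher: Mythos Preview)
Your proposal is correct and matches the paper's approach exactly: the paper does not give a standalone proof of Corollary~\ref{coro-apgd} but treats it as the direct specialization of Theorem~\ref{th-proximal_fast} to $\hat S=[n]$ and $\theta_0=1$, just as Corollary~\ref{coro-accgd} is obtained from Theorem~\ref{th-smooth_fast}. Your verification of the hypotheses ($p_i=1$, $\theta_0=1\in(0,\min_i p_i]$, ESO reducing to~\eqref{a-esofull1}, $C=\tfrac12\|x_0-x_*\|_v^2\ge0$) and the resulting arithmetic are all correct.
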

As discussed for the unconstrained case in Section~\ref{subsec-agd}, Algorithm~\ref{alg:apgd} and Corollary~\ref{coro-apgd}  can be attributed to Tseng~\cite{tseng2008accelerated}.

\subsection{Special case 3: Parallel Coordinate Descent Method (PCDM)}\label{subsec-PCDM}

Le $\hat S$ be a proper uniform  sampling (i.e., sampling for which $p_i = \Prob(i\in \hat{S})>0$ is the same for all $i\in[n]$, necessarily equal to $\Exp[|\hat{S}|]/n$). Furthermore, letting 
$\tau=\E[|\hat S|]$, choose $\theta_k=\tau/n$ for all $k$. For this choice of the parameters  \eqref{a-xyz} holds,  ad hence for Algorithm~\ref{alg:general} reduces to the  PCDM method of Richt\'{a}rik and Tak\'{a}\v{c}~\cite{PCDM}.

\begin{algorithm}[H]
\begin{algorithmic}[1]
\STATE \textbf{Parameters:}   uniform proper random sampling $\hat S$,  positive vector $v\in \R_{++}^ n$ such that $(f,\hat S)\sim ESO(v)$
\STATE \textbf{Initialization:} {choose $x_0\in \dom \psi$ and set $\tau=\Exp[|\hat S|]$}
\FOR{$k\geq 0$} 
\STATE Generate $S_k\sim \hat S $\\
\STATE $x_{k+1}\leftarrow x_k$
\FOR{ $i\in S_k$} 
\STATE
$
x_{k+1}^i=\argmin_{x\in \R^{N_i}} \big\{\< \nabla_i f(x_k), x>+\frac{v_i n}{2 \tau } \|x-x_k^i\|_i^2+\psi^i(x) \big\}
$ \\
\ENDFOR\\
\ENDFOR
\end{algorithmic}
\caption{Parallel Coordinate Descent Method (PCDM)~\cite{PCDM}}
\label{alg:PCDM}
\end{algorithm}
 By applying Theorem~\ref{th-proximal_normal} and using the same reasoning as in the proof of Corollary~\ref{coro-gd} we obtain the following {\em new} convergence result for PCDM.

\begin{coro}\label{coro-PCDM}
For any optimal solution $x_*$ of~\eqref{eq-probse}, the output of Algorithm~\ref{alg:PCDM} for 
all $k\geq 1$ satisfies: 
\begin{align}\label{a-coro-PCDM}
\E[F(x_{k})]-F(x_*)\leq  
\displaystyle\frac{n}{(k-1)\tau+n}\left[
\left(1-\frac{\tau}{n}\right)\left(F(x_0)-F(x_*)\right)+\frac{1}{2}\|x_0-x_*\|^2_{v}\right].
\end{align}
In particular, for $0<\epsilon<  \left(1-\tau/n\right)\left(F(x_0)-F(x_*)\right)+\frac{1}{2}\|x_0-x_*\|^2_{ v} $, if 
\begin{align*}
k\geq \frac{\displaystyle \left(n-\displaystyle \tau\right)\left(F(x_0)-F(x_*)\right)+\displaystyle\frac{n}{2}\|x_0-x_*\|^2_{ v}}{\displaystyle \tau \epsilon }- \frac{n}{\displaystyle\tau}+1,
\end{align*}
then $\E[F(x_k)-F(x_*)]\leq \epsilon$.
\end{coro}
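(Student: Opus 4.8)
The plan is to derive Corollary~\ref{coro-PCDM} by specializing Theorem~\ref{th-proximal_normal} and reusing the reasoning behind the proof of Corollary~\ref{coro-gd}. First I would record the effect of the parameter choices. Since $\hat S$ is uniform and proper, $p_i=\tau/n$ for every $i$, so $\min_i p_i=\tau/n$ and the choice $\theta_k=\theta_0=\tau/n$ for all $k$ is admissible in Theorem~\ref{th-proximal_normal}. As already noted in the statement, for this choice the three sequences coincide, $x_k=y_k=z_k$ (this is~\eqref{a-xyz}); moreover the coefficients of Lemma~\ref{l-xkzkc} collapse, $\gamma_{k,l}^i=0$ for $l<k$ and $\gamma_{k,k}^i=1$ (by induction from $\gamma_{1,0}^i=1-\theta_0 p_i^{-1}=0$, $\gamma_{1,1}^i=1$, using $\theta_k p_i^{-1}=1$ in~\eqref{eq-gamk}), so that $\hat\psi_k=\psi(x_k)$ and hence $\hat F_k=F(x_k)$ for all $k$.

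Next I would establish monotonicity of the expected objective. Because $\hat F_k=F(x_k)$ and $z_k=x_k$, Lemma~\ref{lem:recursion2} with $y\leftarrow x_k$ reduces to $\E_k[F(x_{k+1})]\le F(x_k)$, hence $\E[F(x_{k+1})]\le\E[F(x_k)]$. This is the only place where PCDM differs from plain gradient descent, where the analogous monotonicity held pointwise; here it holds in expectation, which is all one needs. Consequently, for $k\ge1$,
\[\E[F(x_k)]=\min_{l=1,\dots,k}\E[F(x_l)]\le\max\Big\{\E[F(\hat x_k)],\ \min_{l=1,\dots,k}\E[F(x_l)]\Big\},\]
and Theorem~\ref{th-proximal_normal} applied with $y\leftarrow x_*$ and $\theta_0=\tau/n$ gives $\E[F(x_k)]-F(x_*)\le C/((k-1)\theta_0+1)$.

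It then remains to simplify the right-hand side. Since $p_i=\tau/n$ we have $\|x_0-x_*\|^2_{v\circ p^{-2}}=(n/\tau)^2\|x_0-x_*\|^2_v$, whence $C=(1-\tau/n)(F(x_0)-F(x_*))+\tfrac12\|x_0-x_*\|^2_v$; and $(k-1)\theta_0+1=((k-1)\tau+n)/n$. Substituting these two identities yields exactly~\eqref{a-coro-PCDM}. The iteration-complexity statement follows by elementary algebra: writing $D$ for the bracketed quantity in~\eqref{a-coro-PCDM}, requiring the bound to be at most $\epsilon$ is equivalent to $(k-1)\tau+n\ge nD/\epsilon$, which rearranges (using $nD=(n-\tau)(F(x_0)-F(x_*))+\tfrac n2\|x_0-x_*\|_v^2$) to the displayed lower bound on $k$; the hypothesis $\epsilon<D$ just guarantees that this lower bound exceeds $1$.

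I expect the only real (if minor) obstacle to be the bookkeeping that verifies, under the PCDM parameter choice, that Lemma~\ref{lem:recursion2} collapses to the smooth recursion~\eqref{eq-pr12} — i.e.\ checking both $x_k=z_k$ and $\hat F_k=F(x_k)$ — since only after this identification can one invoke the monotonicity-plus-Theorem argument of Corollary~\ref{coro-gd}; everything after that is routine substitution.
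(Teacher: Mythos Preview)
Your proposal is correct and follows precisely the route the paper indicates: specialize Theorem~\ref{th-proximal_normal} with $\theta_0=\tau/n=p_i$, use the collapse $x_k=y_k=z_k$ (and hence $\hat F_k=F(x_k)$) to extract expected monotonicity from Lemma~\ref{lem:recursion2} with $y\leftarrow x_k$, and then identify $\E[F(x_k)]$ with $\min_l\E[F(x_l)]$ just as in the proof of Corollary~\ref{coro-gd}. Your explicit verification that $\gamma_{k,l}^i=\delta_{kl}$ (so $\hat\psi_k=\psi(x_k)$) is a detail the paper leaves implicit but which is indeed required in the proximal setting.
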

A high-probability result involving the level-set distance
\begin{align}\label{a-levelsetdis}
\cR_{v}(x_0,x_*)
\eqdef \max_x \{ \|x-x_*\|_v^2: F(x) \leq F(x_0)\}<+\infty
\end{align}
was  provided in~\cite{PCDM} for
PCDM (Algorithm~\ref{alg:PCDM}).  
Although not explicitly stated in the paper, it is apparent from the proof that their approach yields the following  rate:
\begin{align}\label{a-PCDM-previous}
\E[F(x_k)]-F(x_*) \leq \frac{2n\max \{ \cR_{v}(x_0,x_*), F(x_0)-F(x_*)
\} }{2n\max \{ \cR_{v}(x_0,x_*)/\left(F(x_k-F(x_*)\right), 1
\}+{\tau} k}
\end{align}
Since $\cR_{v}(x_0,x_*) \geq \|x_0-x_*\|_v^2 $, it is clear that for sufficiently large $k$, our rate~\eqref{a-coro-PCDM} is better than~\eqref{a-PCDM-previous} .

\subsection{Special case 4: APPROX with importance sampling}
Finally, let $\{\theta_k\}_{k\geq 0}$ be chosen in accordance with~\eqref{a-thetak+1}. In this case, ALPHA (Algorithm~\ref{alg:general}) reduces to an accelerated coordinate descent method with arbitrary sampling $\hat S$ for solving the proximal minimization problem~\ref{eq-probse}.

\begin{algorithm}[H]
\begin{algorithmic}[1]
\STATE \textbf{Parameters:} proper random sampling $\hat S$ with probability vector $p=(p_1,\dots,p_n)$, $v\in \R_{++}^ n$ such that $(f,\hat S)\sim ESO(v)$
\STATE \textbf{Initialization:} {choose $x_0\in \dom(\psi)$, set   $z_0=x_0$ and $\theta_0=\min_i p_i$}
\FOR{$k\geq 0$} 
\STATE $y_k=(1-\theta_k)x_k+\theta_kz_k$ \\
\STATE Generate $S_k\sim \hat S $\\
\STATE $z_{k+1}\leftarrow z_k$
\FOR{ $i\in S_k$} 
\STATE
$
z_{k+1}^i=\argmin_{z\in \R^{N_i}} \big\{\< \nabla_i f(y_k), z>+\frac{\theta_k  v_i }{2p_i} \|z-z_k^i\|_i^2+\psi^i(z) \big\}
$ \\
\ENDFOR\\
\STATE $x_{k+1}=y_k+\theta_k p^{-1}\cdot(z_{k+1}-z_k)$\\
\STATE $\theta_{k+1}=\frac{\sqrt{\theta_k^4+4\theta_k^2}-\theta_k^2}{2}$
\ENDFOR
\end{algorithmic}
\caption{APPROXis (APPROX~\cite{APPROX} with importance sampling)}
\label{alg:approxn1}
\end{algorithm}

APPROXis is a generalization of APPROX~\cite{APPROX} from a uniform sampling to an arbitrary sampling: we recover APPROX if 
 $\hat S$ is uniform with $\tau=\Exp[|\hat S|]$ and  $\theta_0=\tau/n$.
\begin{coro}\label{coro-APPROX}
 For any optimal solution $x_*$ of~\eqref{eq-probse}, the output of Algorithm~\ref{alg:approxn1} for 
all $k\geq 1$ satisfies: 
$$
\E[F(x_k)-F(x_*)]\leq  \frac{4\left[ \left(1-\displaystyle\min_i p_i\right)\left(F(x_0)-F(x_*)\right)+\displaystyle\min_i\frac{ p_i^2}{2}\|x_0-x_*\|^2_{v\circ p^{-2}}\right]}{((k-1) \displaystyle\min_i p_i+2)^2} .\quad 
$$
In particular, for $\epsilon>0$, if 
\begin{align}\label{a-coro-APPROX}
k\geq \frac{\displaystyle {2} \sqrt{\left(1-\displaystyle\min_i p_i\right)\left(F(x_0)-F(x_*)\right)+\displaystyle\min_i\frac{ p_i^2}{2}\|x_0-x_*\|^2_{v\circ p^{-2}}}}{\displaystyle\min_i p_i\sqrt{\epsilon}}- \frac{2}{\displaystyle\min_i p_i}+1,
\end{align}
then
$\E[F(x_k)-F(x_*)]\leq \epsilon$.
\end{coro}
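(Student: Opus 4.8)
The plan is to obtain Corollary~\ref{coro-APPROX} as a direct specialization of Theorem~\ref{th-proximal_fast}. The first observation is that Algorithm~\ref{alg:approxn1} is exactly Algorithm~\ref{alg:general} run with the stepsize parameter $\theta_0=\min_i p_i$ and with $\{\theta_k\}_{k\geq 0}$ generated by the accelerated recursion~\eqref{a-thetak+1}; the sampling $\hat S$, its probability vector $p$, and the ESO parameter $v$ are carried over unchanged. Consequently the iterates $\{x_k\}_{k\geq 1}$ of Algorithm~\ref{alg:approxn1} are precisely those analyzed in Theorem~\ref{th-proximal_fast}, and all that remains is to check the hypotheses of that theorem and substitute.

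To verify admissibility, note that properness of $\hat S$ gives $p_i>0$ for all $i$, while $p_i\leq 1$ trivially; hence $\theta_0=\min_i p_i\in(0,\min_i p_i]$, which is the range demanded by Theorem~\ref{th-proximal_fast}. Taking $y=x_*$, the constant
$$
C=(1-\theta_0)\bigl(F(x_0)-F(x_*)\bigr)+\tfrac{\theta_0^2}{2}\|x_0-x_*\|_{v\circ p^{-2}}^2
$$
is a sum of two nonnegative terms: $1-\theta_0\geq 0$ since $\theta_0\leq 1$, $F(x_0)-F(x_*)\geq 0$ by optimality of $x_*$, and the weighted norm is nonnegative because $v\circ p^{-2}\in\R^n_{++}$. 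Thus $C\geq 0$, so Theorem~\ref{th-proximal_fast} applies and gives $\E[F(x_k)]-F(x_*)\leq 4C/((k-1)\theta_0+2)^2$ for all $k\geq 1$. Substituting $\theta_0=\min_i p_i$, and using that $\theta_0^2/2=\min_i(p_i^2/2)$ as a scalar coefficient, yields exactly the first displayed bound of the corollary.

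The complexity estimate then follows by a routine rearrangement: $\E[F(x_k)]-F(x_*)\leq\epsilon$ is guaranteed once $((k-1)\min_i p_i+2)^2\geq 4C/\epsilon$, equivalently once $(k-1)\min_i p_i+2\geq 2\sqrt{C/\epsilon}$; solving for $k$ and inserting the value of $C$ produces~\eqref{a-coro-APPROX}.

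Finally, a remark on where the effort lies. This argument is essentially bookkeeping; the substantive content is upstream, in the master recursion of Lemma~\ref{lem:recursion2} (valid for an arbitrary stepsize sequence) and in the convex-combination identity of Lemma~\ref{l-xkzkc}, which together power Theorem~\ref{th-proximal_fast}. The only place requiring a moment's attention here is confirming that the hypothesis ``$C\geq 0$'' of Theorem~\ref{th-proximal_fast} is automatically satisfied for the canonical choice $y=x_*$; this is precisely why the statement is phrased in terms of an optimal solution rather than an arbitrary $y\in\R^N$.
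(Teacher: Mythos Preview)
Your proof is correct and follows exactly the paper's approach: the authors' proof is simply ``This is a direct corollary of Theorem~\ref{th-proximal_fast} by taking $\theta_0=\min_i p_i$,'' and you have supplied precisely this specialization together with the routine verification that $C\geq 0$ and the algebraic inversion leading to~\eqref{a-coro-APPROX}.
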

\begin{proof}
This is a direct corollary of Theorem~\ref{th-proximal_fast} by taking $\theta_0=\min_i p_i$.
\end{proof}
 If $\hat S$ is a uniform sampling with $\tau=\Exp[|\hat S|]$ and $\theta_0=\tau/n$, then we recover the
 convergence result established for APPROX~\cite[Theorem 3]{APPROX}, as well as all the special cases that APPROX can recover, including the fast distributed  coordinate descent method Hydra$^2$~\cite{Hydra2}.

\end{document}